\let\oldemptyset\emptyset
\theoremstyle{plain}
\newtheorem{prop}{Proposition}
\newtheorem{thm}{Theorem}
\newtheorem{lem}{Lemma}
\newtheorem{cor}{Corollary}%[section]
\newtheorem{conj}{Conjecture}
\theoremstyle{definition}
\newtheorem{example}{Example}
\newtheorem{defn}{Definition}
\theoremstyle{remark}
\newtheorem{rem}{Remark}
\newcounter{cnt}
\def\mydggeometry{\makeatletter\dg@YGRID=1\dg@XGRID=20\unitlength=0.003pt\makeatother}
\makeatother \theoremstyle{remark}
\numberwithin{equation}{section}
\def\section{\def\@secnumfont{\mdseries}\@startsection{section}{1}%
  \z@{.7\linespacing\@plus\linespacing}{.5\linespacing}%
  {\normalfont\scshape\centering}}
\def\subsection{\def\@secnumfont{\bfseries}\@startsection{subsection}{2}%
  {\parindent}{.5\linespacing\@plus.7\linespacing}{-.5em}%
  {\normalfont\bfseries}}
\begin{document}

\title[]{Marked multi-colorings and marked chromatic polynomials of hypergraphs and subspace arrangements}

\author{Chaithra P}
\address{Department of Mathematics, Indian Institute of Science, Bangalore 560012, India}
\email{chaithrap@iisc.ac.in}
\thanks{C.P. acknowledges funding from the Prime minister’s research fellowship (TF/PMRF-22-5467.03) for carrying out this research.}

\author{Shushma Rani}
\address{Department of Mathematics, Indian Institute of Science, Bangalore 560012, India}
\email{shushmarani@iisc.ac.in}
\thanks{S.R. acknowledges funding from the NBHM postdoctoral research fellowship (0204/33/2024/R\&D-II/14345) for carrying out this research.}

\author{R. Venkatesh}
\address{Department of Mathematics, Indian Institute of Science, Bangalore 560012, India}
%\address{Visiting Faculty, Department of Mathematics, Indian Institute of Technology Madras, Chennai 600036, India}
\email{rvenkat@iisc.ac.in}
\thanks{R.V. was partially supported by the ANRF Core Grant: ANRF/CRG/2023/008780.}

\subjclass[2010]{}
\begin{abstract}

\iffalse
We introduce the notions of \emph{marked multi-colorings}, \emph{marked chromatic polynomials}, and \emph{marked (multivariate) independence series} for hypergraphs. We prove that the coefficients of the \(q\)\nobreakdash-th power of the marked independence series of a hypergraph coincide with its marked chromatic polynomials. This result generalizes an analogous relationship established for graphs in \cite[Theorem~1]{CKV2025}. Furthermore, we extend these notions naturally to subspace arrangements. In particular, we prove that the number of marked multi $q$-colorings of arrangements are polynomials in $q.$
Moreover, we define the (marked) independence series of subspace arrangements and establish that the \((-q)\)\nobreakdash-th power of the independence series of a hyperplane arrangement has non-negative coefficients. We conjecture that the same non-negativity property holds for the \((-q)\)\nobreakdash-th power of the independence series of a hypergraph if and only if all of its edges have even cardinality.\fi

We introduce the concepts of \emph{marked multi-colorings}, \emph{marked chromatic polynomials}, and \emph{marked (multivariate) independence series} for hypergraphs. We show that the coefficients of the \(q\)\nobreakdash-th power of the marked independence series of a hypergraph coincide with its marked chromatic polynomials in $q$, thereby generalizing a corresponding result for graphs established in \cite[Theorem~1]{CKV2025}.
These notions are then naturally extended to subspace arrangements. In particular, we prove that the number of marked multi \(q\)-colorings of a subspace arrangement is a polynomial in \(q\). We also define the (marked) independence series for subspace arrangements and prove that the \((-q)\)\nobreakdash-th power of the independence series of a hyperplane arrangement has non-negative coefficients.
We further conjecture that the \((-q)\)\nobreakdash-th power of the independence series of a hypergraph has non-negative coefficients if and only if all its edges have even cardinality.

\iffalse
We introduce the notions of \emph{marked multi-colorings}, \emph{marked chromatic polynomials}, and \emph{marked (multivariate) independence series } for hypergraphs. We prove that the coefficients of the \( q \)-th power of the marked independence series of a hypergraph coincide with its marked chromatic polynomials. This result generalizes a similar relationship established for graphs in \cite[Theorem~1]{CKV2025}. Furthermore, we show that these notions extend naturally to subspace arrangements and in particular we define the marked independence series of a subspace arrangement. We prove that the \((-q)\)-th power of the independence polynomials of a hyperplane arrangement has non-negative coefficients. We conjecture that the same non-negativity property holds for the \((-q)\)-th power of the independence polynomials of hypergraphs if and only if all edges of the hypergraph have even cardinality.
\fi

\end{abstract}

\maketitle

\section{Motivation}

\noindent
The set of integers, nonnegative integers, positive integers, and rational numbers are denoted as $\mathbb{Z}$, $\mathbb{Z}_{\ge 0}$, $\mathbb{N}$ and $\mathbb Q$ respectively. 
%The field of rational numbers is denoted by $\mathbb{Q}$.
For any set $S$, we define its (multi) power set, consisting of all (multi) subsets of $S$, as $P(S)$ (resp. as $P^{\mathrm{mult}}(S)$). The set of all subsets of $S$ with exactly $d$ elements is denoted by $\binom{S}{d}$. Additionally, for any $n \in \mathbb{N}$, we define the index set 
\[ \textbf{[}n \textbf{]} = \{1, 2, \dots, n\}. \]
For $n \in \mathbb{N}$, let $R_n$ and $\tilde{R}_n$ denote the polynomial ring and formal power series ring, respectively, generated by the commuting variables $x_1, \dots, x_n$ over $\mathbb{Q}$:
\[ R_n := \mathbb{Q}[x_1,\dots, x_n], \quad \tilde{R}_n := \mathbb{Q}[[x_1,\dots, x_n]].\]
For any $f \in \tilde{R}_n$ satisfying $f(0,\dots, 0) = 0$, the formal series expansion yields
\[ (1 - f)^{-1} = 1 + f + f^2 + \cdots \in \tilde{R}_n. \] We write $\mathbf{m} \geq 0$ if $m_i \geq 0$ for all $i \in \textbf{[}n \textbf{]}$. We set
$x^{\mathbf{m}} = x_1^{m_1} x_2^{m_2} \cdots x_n^{m_n},\ \text{for } \mathbf{m} \geq 0.$
Additionally, we denote the support of $\mathbf{m}$ as
\[ \mathrm{supp}(\mathbf{m}) = \{ i \in \textbf{[}n \textbf{]} : m_i \neq 0 \} \ \text{and} \ |\mathbf{m}| =\sum_{i\in \textbf{[}n \textbf{]}} m_i.\]
Let $f = \sum\limits_{\mathbf{m}\ge 0}a_{\mathbf{m}}x^{\mathbf{m}}\in \tilde{R}_n$. We write $f\ge 0$ if $a_{\mathbf{m}}\ge 0$ for all $\mathbf{m}\ge 0.$

\medskip
\noindent
\textbf{Basic Questions.}
Let $n$ be a positive integer, \textit{we fix a subset $S\subseteq \textbf{[}n \textbf{]}$ and call it special}, note that $S$ could be empty.  Let $\mathcal{A} \subseteq P(\textbf{[}n \textbf{]})$ be a collection of subsets of $\textbf{[}n \textbf{]}$ such that 
\[ \text{$\emptyset \in \mathcal{A}$ and} \ \bigcup_{A \in \mathcal{A}} A = \textbf{[}n \textbf{]}. \]
\noindent
Consider the power series 
\[ I_S(\mathcal{A}, x) = \sum_{A \in \mathcal{A}} \left( \prod_{i \in A\backslash A\cap S} x_i \right)\cdot \left( \prod_{i \in A\cap S} \frac{x_i}{1-x_i} \right) \in \tilde{R}_n\]
%set $$x(A)=\left( \prod_{i \in A\backslash A\cap S} x_i \right)\cdot \left( \prod_{i \in A\cap S} \frac{x_i}{1-x_i} \right).$$
Since $I(\mathcal{A}, 0) = 1$, it follows that the inverse of  $I_S(\mathcal{A}, x)$ exists in the formal power series ring $\tilde{R}_n.$ In particular, we have
$I_S(\mathcal{A}, x)^q\in \tilde{R}_n$ for any integer $q\in \mathbb{Z}.$ %We are interested in the expression of the coefficients of $I_S(\mathcal{A}, x)^q$ in terms of $\mathcal{A}$. 
Write 
\[
I_S(\mathcal{A}, x)^q = \sum_{\mathbf{m} \in \mathbb{Z}_{\ge 0}^{n}} I_S(\mathcal{A},x)^q[x^\mathbf{m}] x^{\mathbf{m}}.
\]

\noindent
The central motivation of this work is to study the coefficients \( I_S(\mathcal{A},x)^q[x^\mathbf{m}] \), and to determine whether these coefficients have a meaningful combinatorial interpretation in terms of the collection \( \mathcal{A} \). This naturally leads to several fundamental questions:
\medskip
\begin{enumerate}
    \item What kind of functions in \( q \) are the coefficients of \( I_S(\mathcal{A}, x)^q \)?\medskip
    \item Do these coefficients have any meaningful combinatorial interpretation related to \( \mathcal{A} \), at least for certain “well-behaved” collections?\medskip
    \item Can the coefficients \( I_S(\mathcal{A},x)^q[x^\mathbf{m}] \) be interpreted combinatorially in terms of \( \mathcal{A} \) counting something more meaningfully?\medskip
    \item When the coefficients of $I_S(\mathcal{A}, -x)^{-1}$ are non-negative?\medskip
    \item Is it possible to characterize all such \( \mathcal{A} \) for which the coefficients of $I_S(\mathcal{A}, -x)^{-1}$ are non-negative?
\end{enumerate}
\medskip
\noindent
It is straightforward to see that the coefficients of  \( I_S(\mathcal{A}, x)^q \) are polynomials in $q$ (see Proposition \ref{keyprop}).
In this paper, we provide affirmative answers to the questions (1) and (2) when \( \mathcal{A} \) forms an \textit{independence system} or \textit{abstract simplicial complex}.
Specifically, we construct a hypergraph $\mathcal{G}_{\mathcal{A}}$ naturally associated with a given independence system $\mathcal{A}$, and we show that the coefficients of 
$I(\mathcal{A},x)^q$ 
coincide with the
(marked) chromatic polynomials of the hypergraph $\mathcal{G}_{\mathcal{A}}$. 
To establish this connection, we introduce the notions of \emph{marked multi-colorings}, \emph{marked chromatic polynomials} and \emph{marked (multivariate) independence series/polynomials} for hypergraphs
  generalizing the respective notions that are available for graphs (see \cite{CKV2025} for more details).   

\medskip
\noindent
Let $\mathcal{G} = (\mathcal{V}, \mathcal{E})$ be a hypergraph with the non-empty vertex set $\mathcal{V}$ and edge set $\mathcal{E}$. Fix a subset $\mathcal{V}^{\mathrm{sp}}\subseteq \mathcal{V}$, and call them special vertices. A $\mathcal{V}^{\mathrm{sp}}$-marked or (simply marked) multi-coloring of $\mathcal{G}$ associated to $\bold{m}$ using at most $q$-colors is a map
  $f:  \mathcal{V} \rightarrow P^{\mathrm{mult}}(\{1,\dots,q\})$ satisfying $f(v)$ is a subset of $\{1,\dots,q\}$ for all $v\in  \mathcal{V}\backslash  \mathcal{V}^{\mathrm{sp}}$, 
  and $|f(v)|=m_v$ for all $v\in \mathcal{V}$ and $$ \text{$\bigcap\limits_{v\in e}\, f(v)=\emptyset$ for all $e\in \mathcal{E}.$}$$
  The marked chromatic polynomial of $\mathcal{G}$, denoted as ${_{\bold{m}}\Pi^{\mathrm{mark}}_\mathcal{G}(q)}$, counts the number of marked multi-colorings of $\mathcal{G}$ associated to $\bold{m}$  using at most $q$-colors. The marked (multivariate) independence series of $\mathcal G$ is given by
     $$I^{\mathrm{mark}}(\mathcal{G},x) = \sum_{U} x(U)$$
where $U$ runs over all marked-independent subsets $U$ of $\mathcal{G}$, i.e., $U\in P^{\mathrm{mult}}(\mathcal{V})$ whose underlying set is independent subset of $\mathcal{G}$ and the special vertices allowed to appear any number of times in $U$, and $x(U) = \prod_{i\in U} x_i$ (counted with multiplicity).  With these definitions, we prove the following remarkable identity for hypergraphs: 
\begin{thm}
    Let $\mathcal{G}$ be a marked hypergraph with vertex set $\mathcal{V}$ and special set of vertices $\mathcal{V}^{\mathrm{sp}}$. For $q\in \mathbb{Z}$, we have as formal series
    $$I^{\mathrm{mark}}(\mathcal{G},x)^q=\sum_{\mathbf{m}\in \mathbb{Z}_{\ge 0}^{n}} {_{\bold{m}}\Pi^{\mathrm{mark}}_\mathcal{G}(q)}\ x^\mathbf{m}$$
    where ${_{\bold{m}}\Pi^{\mathrm{mark}}_\mathcal{G}(q)}$ counts the number of marked multi-colorings of $\mathcal{G}$ associated to $\bold{m}$  using at most $q$-colors and
    $I^{\mathrm{mark}}(\mathcal{G},x)$ is the marked (multivariate) independence series of $\mathcal G$.
\end{thm}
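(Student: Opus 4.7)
The plan is to prove the identity first for positive integers $q$ by a natural bijection between formal expansions of the $q$-fold product and marked multi-colorings, and then extend to arbitrary $q \in \mathbb{Z}$ via polynomial uniqueness. By Proposition~\ref{keyprop}, the coefficient of $x^{\mathbf{m}}$ in $I^{\mathrm{mark}}(\mathcal{G},x)^q$ is a polynomial in $q$, so it suffices to prove the identity for all $q \in \mathbb{N}$.

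For $q \in \mathbb{N}$, I would expand the $q$-th power as the iterated product
\[
I^{\mathrm{mark}}(\mathcal{G},x)^q \;=\; \sum_{(U_1,\ldots,U_q)} x(U_1)\,x(U_2)\cdots x(U_q),
\]
where each $U_c$ ranges independently over marked-independent multisets of $\mathcal{G}$, so that the coefficient of $x^{\mathbf{m}}$ counts those $q$-tuples in which every vertex $v$ has total multiplicity $m_v$ across the $U_c$'s. The key construction is the bijection between such tuples and marked multi-colorings $f$ of $\mathcal{G}$ associated to $\mathbf{m}$ using at most $q$ colors, given by the rule
\[
\mathrm{mult}_{f(v)}(c) \;=\; \mathrm{mult}_{U_c}(v) \qquad (v\in \mathcal{V},\ c\in \{1,\ldots,q\}).
\]

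The verification splits into three checks, each immediate. First, for $v \notin \mathcal{V}^{\mathrm{sp}}$, the vertex $v$ occurs in each $U_c$ with multiplicity at most one precisely when every color occurs in $f(v)$ with multiplicity at most one, matching the requirement that $f(v)$ be an ordinary subset of $\{1,\ldots,q\}$; for $v \in \mathcal{V}^{\mathrm{sp}}$ no restriction arises on either side. Second, the size condition $|f(v)| = \sum_{c=1}^q \mathrm{mult}_{U_c}(v) = m_v$ is exactly the prescribed total-multiplicity condition, and it translates the monomial weight $x(U_1)\cdots x(U_q)$ into $x^{\mathbf{m}}$. Third, for any edge $e \in \mathcal{E}$, the multiset intersection $\bigcap_{v \in e} f(v)$ contains color $c$ with positive multiplicity if and only if every $v \in e$ lies in the underlying set of $U_c$; hence $\bigcap_{v \in e} f(v) = \emptyset$ for every edge $e$ is equivalent to saying that no $U_c$ has $e$ in its underlying set, i.e.\ to each $U_c$ being marked-independent. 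This identifies the coefficient of $x^{\mathbf{m}}$ on the left with ${_{\mathbf{m}}\Pi^{\mathrm{mark}}_{\mathcal{G}}(q)}$ for every $q \in \mathbb{N}$.

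To conclude, the left-hand coefficient is a polynomial in $q$ by Proposition~\ref{keyprop}, and it agrees with the counting function ${_{\mathbf{m}}\Pi^{\mathrm{mark}}_{\mathcal{G}}(q)}$ at every positive integer; therefore ${_{\mathbf{m}}\Pi^{\mathrm{mark}}_{\mathcal{G}}(q)}$ itself is a polynomial in $q$, and the identity extends to all $q \in \mathbb{Z}$. The only genuine subtlety is the multiset bookkeeping in the bijection, namely the symmetric correspondence between ``vertex multiplicity in a marked-independent set $U_c$'' and ``color multiplicity in a marked color multiset $f(v)$'', which is precisely what forces the special-versus-nonspecial distinction to match on both sides.
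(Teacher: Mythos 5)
Your proof is correct. It is worth noting how it differs from the paper's argument: the paper expands $I^{\mathrm{mark}}(\mathcal{G},x)^q=\sum_{k\ge 0}\binom{q}{k}\,(I^{\mathrm{mark}}(\mathcal{G},x)-1)^k$, which is valid for every $q\in\mathbb{Z}$ at once, identifies $(f-1)^k[x^{\mathbf m}]$ with the number of ordered $k$-tuples of \emph{non-empty} marked-independent multisets partitioning the multiset attached to $\mathbf m$ (the sets $P_k^{\mathrm{mult}}(\mathcal{G},\mathbf m)$), and then invokes Proposition~\ref{props6}, whose own proof is a bijection between such $k$-tuples together with a choice of $k$ colors and the marked multi-colorings. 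You instead expand the $q$-fold product directly for $q\in\mathbb{N}$, allowing empty factors, and set up a single global bijection $\mathrm{mult}_{f(v)}(c)=\mathrm{mult}_{U_c}(v)$ between $q$-tuples and colorings; the two decompositions are related by grouping a $q$-tuple according to its set of non-empty coordinates, which is exactly where the $\binom{q}{k}$ comes from. Your route is more self-contained (it bypasses Proposition~\ref{props6} entirely and makes the edge condition $\bigcap_{v\in e}f(v)=\emptyset$ transparent), at the cost of only proving the identity for positive integers and then appealing to polynomiality of the coefficients --- an appeal you do justify correctly via Proposition~\ref{keyprop} (applied to $\mathcal{A}=I(\mathcal{G})$, $S=\mathcal{V}^{\mathrm{sp}}$), and which the paper itself also needs in its final step to pass from numerical equalities to an equality of polynomials. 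The paper's route handles all integer $q$ uniformly but at the price of the auxiliary partition sets and the extra lemma.
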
 
\noindent
Proposition~\ref{ordinarychormatic} establishes a precise relationship between marked chromatic polynomials and classical chromatic polynomials. This connection allows us to do the explicit computation of marked chromatic polynomials in the case of chordal graphs, which was first done in \cite[Theorem 2, Section 3.4]{CKV2025}. 

\medskip
\noindent
In the later part of the paper, we consider more generally subspace arrangements and study their characteristic polynomials. Generalizing the notions developed in the hypergraph setting, we introduce the concept of \(S\)-marked \(q\)\nobreakdash-colorings for a given arrangement \(\mathscr{A}\), and investigate the associated marked chromatic polynomial, which enumerates the number of distinct \(S\)\nobreakdash-marked \(q\)\nobreakdash-colorings of \(\mathscr{A}\) corresponding to \(\mathbf{m}\). It is important to emphasize that, \emph{a priori}, this function is not guaranteed to be a polynomial in \(q\). Nevertheless, we establish that the marked chromatic polynomial of \(\mathscr{A}\) is indeed a polynomial in \(q\) (see Theorem~\ref{mainthmarrangement}). We prove this by observing the connection between the marked chromatic polynomials and the characteristic polynomials of arrangements. More precisely, we have the following:
\begin{thm} Let $\mathscr{A}$ be an arrangement and  $\mathbf m = (m_1, \ldots, m_n)\in  \mathbb{Z}_{\ge 0}^n$, and let $S \subseteq \mathrm{supp}(\mathbf{m})\subseteq  \textbf{[}n \textbf{]}$ be the set of special nodes. Then we have 
   \begin{equation}\label{equarrangement}
{}_{\mathbf{m}}\Pi^{\mathrm{mark}}_{\mathscr{A}}(q) = \sum_{\underline{\lambda}\in S(\bold m)}
\frac{\chi_{\mathscr{A}(\underline{\lambda}, \mathbf{m})}(q)}
{\prod_{i\in \mathrm{supp}(\bold m)}\prod_{k=1}^{\infty}(d^{\boldsymbol{\lambda}_i}_{k}!)}
\end{equation}
where $\chi_{\mathscr{A}(\underline{\lambda}, \mathbf{m})}(q)$ is the characteristic polynomial of the arrangement $\mathscr{A}(\underline{\lambda}, \mathbf{m}).$
In particular, ${}_{\mathbf{m}}\Pi^{\mathrm{mark}}_{\mathscr{A}}(q)$ is a polynomial in $q,$ and we can specialize $q$ to any complex numbers. \qed 
\end{thm}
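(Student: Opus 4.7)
The plan is to enumerate marked multi-$q$-colorings of $\mathscr{A}$ by sorting them according to the multiplicity profile of the multiset assigned at each special vertex, and then to identify the colorings of a given profile with proper colorings of a suitable enlarged arrangement whose count is a characteristic polynomial. To begin, I would unpack the definition: an $S$-marked multi-$q$-coloring associated with $\mathbf{m}$ attaches to each vertex $i\in\mathrm{supp}(\mathbf{m})$ a multiset $M_i\subseteq\{1,\dots,q\}$ of size $m_i$, required to be a genuine set when $i\notin S$, and subject to the compatibility conditions imposed by the subspaces of $\mathscr{A}$. The multiplicity profile of $M_i$ determines a partition $\boldsymbol{\lambda}_i$ of $m_i$ (with only singleton parts when $i\notin S$), and the tuple $\underline{\lambda}=(\boldsymbol{\lambda}_i)_{i\in\mathrm{supp}(\mathbf{m})}$ ranges over the finite index set $S(\mathbf{m})$.

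Next I would construct, for each such $\underline{\lambda}$, the enlarged arrangement $\mathscr{A}(\underline{\lambda},\mathbf{m})$ by replacing every vertex $i$ with one labeled copy per part of $\boldsymbol{\lambda}_i$, lifting every subspace of $\mathscr{A}$ accordingly, and adjoining the diagonal subspaces that force distinct copies of the same vertex to receive distinct colors. Then the proper $q$-colorings of $\mathscr{A}(\underline{\lambda},\mathbf{m})$ correspond bijectively to \emph{ordered} data attached to each vertex $i$, namely a tuple of colors (one per part of $\boldsymbol{\lambda}_i$), from which the underlying multiset $M_i$ of the marked coloring is recovered by collecting these colors with multiplicities given by the corresponding parts. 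The classical theorem of Zaslavsky, extended to subspace arrangements via the finite-field method of Athanasiadis, identifies the number of such proper colorings with $\chi_{\mathscr{A}(\underline{\lambda},\mathbf{m})}(q)$. The passage from this ordered count to the unordered multiset count introduces the symmetry factor $\prod_i \prod_k d^{\boldsymbol{\lambda}_i}_k!$, which counts the permutations of equal-size parts within each $\boldsymbol{\lambda}_i$; dividing by this factor and summing over $\underline{\lambda}\in S(\mathbf{m})$ yields the stated identity. Polynomiality in $q$ then follows because each summand is already a polynomial in $q$ and $S(\mathbf{m})$ is finite.

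The principal technical obstacle is the precise specification of $\mathscr{A}(\underline{\lambda},\mathbf{m})$: one must identify exactly which lifted subspaces to include, and in particular must adjoin the correct diagonal subspaces, so that proper colorings of the enlarged arrangement really do correspond, after the symmetry quotient, to marked multi-colorings of $\mathscr{A}$ without over- or under-counting. A secondary subtlety is ensuring at non-special vertices $i\notin S$ that only partitions $\boldsymbol{\lambda}_i$ with all parts equal to $1$ contribute, so that $M_i$ is a set rather than a multiset, matching the definition of a marked coloring. Once these bookkeeping issues are settled, the argument reduces to a direct application of the characteristic polynomial's role as a generating function for proper colorings of an arrangement.
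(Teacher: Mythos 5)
Your proposal follows essentially the same route as the paper: stratify the marked multi-colorings by the multiplicity profile $\underline{\lambda}\in S(\mathbf{m})$, identify the colorings of a fixed profile with points of the complement of $\mathscr{A}(\underline{\lambda},\mathbf{m})$ over $\mathbb{F}_q$ counted by $\chi_{\mathscr{A}(\underline{\lambda},\mathbf{m})}(q)$ via the finite-field method, and divide by the symmetry factor $\prod_i\prod_k d^{\boldsymbol{\lambda}_i}_k!$ coming from permuting coordinates attached to equal parts. This matches the paper's argument, and the bookkeeping issues you flag (which lifted and diagonal subspaces to include, and forcing $\boldsymbol{\lambda}_i=(1^{m_i})$ for $i\notin S$) are exactly the points the paper's construction of $\mathscr{A}(\underline{\lambda},\mathbf{m})$ and its definition of $S(\mathbf{m})$ resolve.
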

\noindent
We refer the readers to look at the Section \ref{mainthmarrangementsection} for the construction of $\mathscr{A}(\underline{\lambda}, \mathbf{m})$ and the definition of  $\chi_{\mathscr{A}(\underline{\lambda}, \mathbf{m})}(q)$.
 \iffalse
The above result has an interesting application to additive combinatorics. Let us consider the following problem:
let $q>>)$ be a large prime power, and $m_1, m_2, m_3\in \mathbb Z_{\ge 0}$. We want to count the number of tuples $(A, B)$ such that
$A, B\subseteq \mathbb F_q$ with
$|A| = m_1$, $|B|=m_2$ and $|A+B|= m_3$. \fi

\medskip
\noindent
The functions 
    ${}_{\mathbf{m}}\Pi^{\mathrm{mark}}_{\mathcal{G}}(q)$ are being polynomials for hypergraphs is not very surprising as it can be derived from the general facts 
   that are recorded in Proposition \ref{keyprop}  and Theorem \ref{expmarkedindp}.
We emphasize here that there is no obvious reason to believe that the functions in $q$ counting the number of $\text{distinct $S$-marked $q-$colorings of $\mathscr{A}$}$ associated with $\mathbf m$ are polynomials in $q$, but it follows from Theorem \ref{mainthmarrangement}. For example, when we consider the hyperplane arrangement $\mathscr{A}$ with only one hyperplane $H$ whose equation given by $x+y=z$, then the function $_{\mathbf m}\Pi_{\mathscr{A}}(q)$ (for large prime $q>>0$) counts the number of ordered tuples $(A, B, C)$ where
  $A, B, C\subseteq \mathbb F_q$ are non-empty subsets satisfying:  $$\text{ $|A|=m_1, |B|=m_2,$ $|C|=m_3$, and   $(A+B)\cap C =\emptyset$}.$$ We have the following explicit expression for 
$_{\mathbf m}\Pi_{\mathscr{A}}(q)$ using Cauchy–Davenport theorem (see Example \ref{numberthyexample} for more details):
\begin{equation}\label{examplefirstint}
        _{\mathbf m}\Pi_{\mathscr{A}}(q) = \sum\limits_{r=0}^{m_1m_2-m_1-m_2+1}\sum\limits_{D\subseteq \mathbb F_q \atop |D|=m_1+m_2+r-1} \alpha_D(q)
{q-(m_1+m_2+r-1)\choose m_{3}},
    \end{equation} where, for a given $D\subseteq \mathbb F_q$, the function
 $\alpha_D(q)$ counts the number of ordered tuples $(A, B)$ where
  $A, B\subseteq \mathbb F_q$ are non-empty subsets such that $|A|=m_1, |B|=m_2,$ and $A+B=D$. It is not obvious that the right hand side of the equation \ref{examplefirstint} adds up to a polynomial. We calculate it for $m_1=m_2=2$,   define $$N_\ell=\# \{(A_1,A_2): A_i\subseteq \mathbb{F}_q, |A_i|=2,\ \text{for}\ \ i=1,2, \ \text{and}\ |A_1+A_2|=\ell \}$$ by Cauchy–Davenport theorem  the possible values of $  |A_1+A_2|$ are $3$ and $4$, therefore $N_3+N_4={q\choose 2}{q\choose 2}$. If $|A+B|=3$ then $A $ and $B$ are in arithmetic
progression with the same diﬀerence by Vosper's theorem (see \cite{Vosper}). Therefore we have $N_3=\frac{q^2(q-1)}{2}$ and $N_4={q\choose 2}{q\choose 2}-\frac{q^2(q-1)}{2}=\frac{q^2(q-1)(q-3)}{4}$. Then we have 
$$ _{\mathbf m}\Pi_{\mathscr{A}}(q) = \frac{q^2(q-1)}{2} {q-3\choose m_3}+\frac{q^2(q-1)(q-3)}{4}{q-4\choose m_3}.$$
  \iffalse
 {\color{red} For $m_3 = m_1+m_2-1$, we have $$ _{\mathbf m}\Pi_{\mathscr{A}}(q) = q^3 {q-(m_1+m_2-1)\choose m_1+m_2-1}$$
 using Vosper's theorem (see \cite{Vosper}).}

\textcolor{blue}{Sage computation for $q=7,11,13$ and $m_1=m
_2=2$ and $m_3=3$, satisfying  the following formula $$ _{\mathbf m}\Pi_{\mathscr{A}}(q) = \frac{q^2(q-1)}{2} {q-(m_1+m_2-1)\choose m_1+m_2-1}$$   }\fi

\medskip
\noindent
In the final part of the paper, we investigate the non-negativity of the coefficients of the inverse of the independence polynomial of a hypergraph. Given a hypergraph \(\mathcal{G} = (\mathcal{V}, \mathcal{E})\), its independence polynomial is defined as
\[
I(\mathcal{G}, x) = \sum_{U} x(U),
\]
where \(x(U) = \prod_{i \in U} x_i\), and the sum runs over all independent subsets \(U \subseteq \mathcal{V}\), that is, those subsets which do not contain any edge \(e \in \mathcal{E}\) as a subset. We prove that the coefficients of \(I(\mathcal{G}, -x)^{-1}\) are non-negative when \(\mathcal{G}\) is a graph (i.e., all edges have cardinality 2). Moreover, we show that the presence of only even-cardinality edges is a necessary condition for this non-negativity to hold in the general hypergraph setting. Based on this evidence and some SAGEMATH computation, we conjecture that: 

\begin{conj}
    The non-negativity of the coefficients of \(I(\mathcal{G}, -x)^{-1}\) characterizes \emph{even hypergraphs}, that is, hypergraphs in which all edges have even cardinality.
\end{conj}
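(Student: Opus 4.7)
The plan is to first reformulate the conjecture via Theorem 1 so that it becomes a reciprocity statement about marked chromatic polynomials, then verify the single-edge base case by a direct calculation, and finally outline an inductive or model-theoretic strategy for the general case. Applying Theorem 1 with $\mathcal{V}^{\mathrm{sp}}=\emptyset$ (so that $I^{\mathrm{mark}}(\mathcal{G},x)=I(\mathcal{G},x)$), setting $q=-1$, and substituting $x\mapsto -x$ yields the series identity
$$
I(\mathcal{G},-x)^{-1}=\sum_{\mathbf{m}\in\mathbb{Z}_{\ge 0}^{n}}(-1)^{|\mathbf{m}|}\,{}_{\mathbf{m}}\Pi_{\mathcal{G}}(-1)\,x^{\mathbf{m}}.
$$
Consequently, the still open direction ``even hypergraph $\Rightarrow$ non-negativity'' is equivalent to the reciprocity statement $(-1)^{|\mathbf{m}|}\,{}_{\mathbf{m}}\Pi_{\mathcal{G}}(-1)\ge 0$ for all $\mathbf{m}$, which is a marked multi-colored analogue of Stanley's identity $(-1)^{|V|}\chi_{G}(-1)=\#\mathrm{AO}(G)$ for graphs.

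The single-edge case already explains why even cardinality is the right hypothesis. If $\mathcal{G}$ has a single edge $e=\{v_1,\dots,v_{2k}\}$, then $I(\mathcal{G},x)=\prod_{i=1}^{2k}(1+x_i)-\prod_{i=1}^{2k}x_i$, and the vanishing of the sign $(-1)^{2k}=1$ produces the expansion
$$
I(\mathcal{G},-x)^{-1}=\Bigl(\prod_{i=1}^{2k}(1-x_i)-\prod_{i=1}^{2k}x_i\Bigr)^{-1}=\sum_{n\ge 0}(x_1\cdots x_{2k})^{n}\prod_{i=1}^{2k}(1-x_i)^{-n-1},
$$
which is manifestly a sum of products of non-negative power series. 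For an odd-cardinality edge the same computation produces a sign-alternating series, and a direct check on the triangle edge with $\mathbf{m}=(2,2,2)$ already yields a negative coefficient, exactly matching the necessary condition proved earlier in the paper.

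For the general case my plan is to proceed by induction on $|\mathcal{E}|$ with a deletion-contraction-type identity for $I(\mathcal{G},-x)^{-1}$ that respects the even-edge hypothesis. Concretely, fix an even edge $e$ and a vertex $v\in e$; partitioning independent sets of $\mathcal{G}$ according to their intersection with $e\setminus\{v\}$, one would aim at an identity of the form
$$
I(\mathcal{G},-x)^{-1}=P_{1}(x)\cdot I(\mathcal{G}_{1},-x)^{-1}+P_{2}(x)\cdot I(\mathcal{G}_{2},-x)^{-1},
$$
where $P_{1},P_{2}$ have non-negative coefficients and $\mathcal{G}_{1},\mathcal{G}_{2}$ are strictly smaller even hypergraphs. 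A parallel (and perhaps more transparent) route is to seek a combinatorial model: a notion of \emph{balanced orientation} of $\mathcal{G}$ in which each edge of size $2k$ receives an equal split of its vertices into two classes (an operation that fails exactly when $|e|$ is odd), and then to prove that $(-1)^{|\mathbf{m}|}\,{}_{\mathbf{m}}\Pi_{\mathcal{G}}(-1)$ enumerates pairs (balanced orientation, compatible multi-coloring of type $\mathbf{m}$). For graphs this should recover Stanley's reciprocity, and for a single even edge it should recover the Cauchy-type expansion above.

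The main obstacle is that both routes require a canonical structural choice into which the parity assumption enters essentially. For the recursion, the delicate point is keeping the auxiliary hypergraphs $\mathcal{G}_{1},\mathcal{G}_{2}$ even, since standard contraction operations can easily break parity (for example, merging two vertices of an edge of size $2k$ produces an edge of size $2k-1$). For the orientation model, the hard step is proving by a deletion-contraction or Möbius-function argument that the signed evaluation at $q=-1$ is genuinely an enumerator of balanced orientations, rather than merely a signed alternating sum that happens to be non-negative. In either formulation one must control the massive cancellations encoded in ${}_{\mathbf{m}}\Pi_{\mathcal{G}}(-1)$ and isolate exactly the ``parity obstruction'' which the odd-edge counterexamples show is the sole barrier to non-negativity.
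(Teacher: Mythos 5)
The statement you are addressing is a conjecture, and the paper itself does not prove it: it only establishes the ``only if'' direction (non-negativity forces all edges to have even cardinality, via the restriction to a single odd edge and the coefficient $2+(-2)^r<0$ at $\mathbf{m}=(2,\dots,2)$), and supports the converse by machine computation. Your reduction of the open direction to the reciprocity statement $(-1)^{|\mathbf{m}|}\,{}_{\mathbf{m}}\Pi_{\mathcal{G}}(-1)\ge 0$ via Theorem~\ref{expmarkedindp} is correct and is the right way to frame the problem, and your single-even-edge computation
\[
I(\mathcal{G},-x)^{-1}=\Bigl(\prod_{i=1}^{2k}(1-x_i)-\prod_{i=1}^{2k}x_i\Bigr)^{-1}=\sum_{n\ge 0}(x_1\cdots x_{2k})^{n}\prod_{i=1}^{2k}(1-x_i)^{-n-1}
\]
is a genuine, correctly verified special case that goes slightly beyond what the paper records (the paper uses the single-edge family only for the necessity direction). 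Your account of the odd case likewise matches the paper's argument.

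However, the proposal does not prove the conjecture, and the gap is exactly where you say it is. Neither of your two routes is carried out: the deletion--contraction identity $I(\mathcal{G},-x)^{-1}=P_{1}(x)\,I(\mathcal{G}_{1},-x)^{-1}+P_{2}(x)\,I(\mathcal{G}_{2},-x)^{-1}$ is only postulated --- you do not construct $P_1,P_2,\mathcal{G}_1,\mathcal{G}_2$, and as you note, the natural contraction of an edge of size $2k$ destroys parity, so it is not clear such a parity-preserving recursion exists at all; and the ``balanced orientation'' model is not defined (for a hypergraph edge of size $2k$ there is no canonical analogue of an acyclic orientation, and no bijection or M\"obius-function argument is supplied showing that $(-1)^{|\mathbf{m}|}\,{}_{\mathbf{m}}\Pi_{\mathcal{G}}(-1)$ enumerates anything). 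What you have written is a correct reformulation plus a verified base case plus a research plan, which leaves the conjecture exactly as open as the paper leaves it; it should not be presented as a proof of the statement.
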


\section{Basic expressions using binomial series}
\noindent First, we derive an easy combinatorial expression for the coefficients \( I_S(\mathcal{A},x)^q[x^\mathbf{m}] \) using the binomial series expansion. Although this is standard, we include it here for the reader’s convenience.

\begin{defn}
Let $\mathbf{m} = (m_i)_{i \in \textbf{[}n \textbf{]}} \in \mathbb{Z}_{\ge 0}^n$ be a tuple of nonnegative integers. We define $P_k^S(\mathcal{A}, \mathbf{m})$ as the set of all ordered $k$-tuples $(A_1, \dots, A_k)$ such that: 
\begin{enumerate}
\item we have $A_j \in P^{\mathrm{mult}}(\textbf{[}n \textbf{]})\backslash \{\emptyset\}$ for all $1\le j\le k$,
    \item the underlying set of $A_j$ belongs to  $\mathcal{A}$,
    \item the elements from $\textbf{[}n \textbf{]}\backslash S$ appear at most once in $A_j$ for each $1\le j\le k$,
    \item the disjoint union of $A_1, \dots, A_k$ corresponds to the multiset $\{ i, \dots, i : i \in \mathrm{supp}(\mathbf{m}) \}$, where each element $i$ appears exactly $m_i$ times.
\end{enumerate}
\end{defn}
\noindent Now we derive an explicit expression for the coefficients of
$ I_S(\mathcal{A}, x)^q$  in terms of $P_k^S(\mathcal{A}, \mathbf{m})$ using the binomial series expansion.

\begin{prop}\label{keyprop}
For all $\mathbf{m} \geq 0$, we have
\[I_S(\mathcal{A},x)^q[x^\mathbf{m}] = \sum_{k \geq 0}\binom{q}{k} |P_k^S(\mathcal{A}, \mathbf{m})|. \]
In particular,  for a fixed $\mathbf{m} \geq 0$, we have that $I_S(\mathcal{A},x)^q[x^\mathbf{m}]$ is a polynomial in $q.$
\end{prop}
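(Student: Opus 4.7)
The plan is to split $I_S(\mathcal{A},x)$ into a constant part and a part vanishing at zero, apply the generalized binomial series, and then match the resulting multinomial expansion term by term with the combinatorial objects in $P_k^S(\mathcal{A},\mathbf{m})$.

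First I would write $I_S(\mathcal{A},x) = 1 + J_S(\mathcal{A},x)$, where
\[
J_S(\mathcal{A},x) = \sum_{\emptyset \neq A \in \mathcal{A}} \left(\prod_{i \in A \setminus S} x_i\right)\left(\prod_{i \in A \cap S}\frac{x_i}{1-x_i}\right)
\]
is the contribution from nonempty $A$; note that $\emptyset \in \mathcal{A}$ contributes the $1$, and $J_S(\mathcal{A},0)=0$ so $J_S(\mathcal{A},x)^k \in \tilde{R}_n$ has no terms of total degree below $k$. Hence the generalized binomial theorem applies formally to give
\[
I_S(\mathcal{A},x)^q = (1+J_S(\mathcal{A},x))^q = \sum_{k\geq 0}\binom{q}{k}J_S(\mathcal{A},x)^k,
\]
and because only finitely many $k$ contribute a term of any fixed degree $|\mathbf{m}|$, one can freely extract $[x^\mathbf{m}]$ on both sides.

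Next I would expand $J_S(\mathcal{A},x)^k$ combinatorially. A choice of term is an ordered tuple $(A_1,\dots,A_k)$ of nonempty sets from $\mathcal{A}$, and for each $j$ one then expands the geometric series
\[
\frac{x_i}{1-x_i} = \sum_{r\geq 1} x_i^r \quad (i \in A_j \cap S),
\]
while the factor $x_i$ (for $i \in A_j \setminus S$) contributes a single copy. Interpreting each $A_j$ as a multiset in which $i \in A_j \setminus S$ has multiplicity $1$ and $i \in A_j \cap S$ has multiplicity $r_{j,i} \geq 1$, this is precisely the data of condition (1)--(3) in the definition of $P_k^S(\mathcal{A},\mathbf{m})$; the exponent-matching condition $[x^\mathbf{m}]$ requires that for each $i$, the total multiplicity of $i$ over $j=1,\dots,k$ equals $m_i$, which is exactly condition (4). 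Therefore
\[
J_S(\mathcal{A},x)^k[x^\mathbf{m}] = |P_k^S(\mathcal{A},\mathbf{m})|,
\]
and the claimed formula follows.

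For polynomiality in $q$: since each $A_j$ in a tuple in $P_k^S(\mathcal{A},\mathbf{m})$ is nonempty, the disjoint union has size $\geq k$, forcing $k \leq |\mathbf{m}|$. Thus $P_k^S(\mathcal{A},\mathbf{m})=\emptyset$ for $k>|\mathbf{m}|$, the sum is finite, and as each $\binom{q}{k}$ is a polynomial of degree $k$ in $q$, the coefficient $I_S(\mathcal{A},x)^q[x^\mathbf{m}]$ is a polynomial in $q$ of degree at most $|\mathbf{m}|$. There is no serious obstacle here; the main care point is simply the bookkeeping that interprets the geometric series expansion of $x_i/(1-x_i)$ as allowing special vertices in $S$ to appear with any positive multiplicity while non-special vertices appear at most once within each $A_j$.
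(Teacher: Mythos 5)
Your proposal is correct and follows essentially the same route as the paper: both write $I_S(\mathcal{A},x)^q=\sum_{k\ge 0}\binom{q}{k}(I_S(\mathcal{A},x)-1)^k$, expand the geometric series $x_i/(1-x_i)$ to turn each monomial of $(I_S(\mathcal{A},x)-1)^k$ into an ordered $k$-tuple of nonempty multisets with underlying sets in $\mathcal{A}$, and identify the coefficient of $x^{\mathbf m}$ with $|P_k^S(\mathcal{A},\mathbf m)|$. Your explicit remark that $k\le|\mathbf m|$ forces the sum to be finite is a small addition the paper leaves implicit, but it does not change the argument.
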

\begin{proof}
  For any $f\in R$ with constant term $1$, we have 
    $$f^q=\sum_{k\geq 0}\binom{q}{k}(f-1)^k, \ \  \text{Where}\  \binom{q}{k}=\frac{q(q-1)\dots(q-(k-1))}{k!}$$
 %Note that $\binom{-q}{k}=(-1)^k\binom{q+k-1}{k}$, for $q\in \mathbb{N}$.
Let us set $f=I_S(\mathcal{A},x)$ and note that 
    $$f-1 = I_S(\mathcal{A},x)-1=\sum_{A\in \mathcal{A}\setminus \{\oldemptyset\}} \tilde{x}(A)$$
where $\tilde{x}(A)= \left( \prod_{i \in A\backslash A\cap S} x_i \right)\cdot \left( \prod_{i \in A\cap S} \frac{x_i}{1-x_i} \right) =\sum\limits_{(p_r)\ge 0 \atop p_r\le 1,\, r\notin A\cap S}\big(\prod_{r\in A}x_r^{p_r}\big)$.
    For $\mathbf{m}\ge 0$, the coefficient of $(f-1)^k[x^\mathbf{m}]$ is $\sum\limits_{(A_1,\dots,A_k)} 1$, where
 the sum ranges over all $k$-tuples $(A_1,\dots,A_k)$ of multi-subsets of $\textbf{[}n \textbf{]}$ satisfying
    \begin{enumerate}
%\item $A_j \in P^{\mathrm{mult}}(\textbf{[}n \textbf{]})\backslash \{\emptyset\}$ for all $1\le j\le k$,
\item the underlying set of $A_r$ belongs to $\mathcal{A}\setminus\{\oldemptyset\}$ for all $1\leq r\leq k$,
    \item the elements from $\textbf{[}n \textbf{]}\backslash S$ appear at most once in $A_j$ for each $1\le j\le k$,
        \item the disjoint union of $\dot{\bigcup}_{r=1}^{k} A_r$ is  equal to the multiset $\{i,\dots,i:i\in\mathrm{supp}(\mathbf{m})\},$ where $i$ appears exactly $m_i$ number of times for each $i\in \mathrm{supp}(\mathbf{m})$.
    \end{enumerate}
    It follows that $(A_1,\dots,A_k)\in P_k^S(\mathcal{A}, \mathbf{m})$ and the sum ranges over all the elements from $P_k^S(\mathcal{A}, \mathbf{m})$. 
    Thus we have 
    $$(f-1)^k[x^\mathbf{m}]=|P_k^S(\mathcal{A}, \mathbf{m})|.$$
    Hence we have 
    $$I_S(\mathcal{A},x)^q[x^\mathbf{m}]=\sum_{k\geq 0}\binom{q}{k}(f-1)^k[x^\mathbf{m}]=\sum_{k\geq 0}\binom{q}{k}|P_k^S(\mathcal{A}, \mathbf{m})|.$$ This completes the proof.
\end{proof}

\noindent
\begin{rem}
Proposition~\ref{keyprop}, in its current form, is unsatisfactory for several reasons. However, when \(\mathcal{A}\) is a collection of subsets of \([n]\) forming an \emph{independence system}, one can construct a hypergraph \(\mathcal{G}_{\mathcal{A}}\) such that \(\mathcal{A}\) coincides with the set of independent subsets of \(\mathcal{G}_{\mathcal{A}}\). In this setting, the coefficients of \(I(\mathcal{A}, x)^q\) admit a combinatorial interpretation as the marked chromatic polynomials of the associated hypergraph \(\mathcal{G}_{\mathcal{A}}\). This construction is developed in the first part of the paper.
 
\end{rem}

\medskip
\noindent
\textbf{Observation.} Let $f(x)$ and $g(x)$ be a two polynomial over complex numbers, and assume that $f(q) = g(q)$ for infinitely many complex numbers $q$. Then we must have 
$f(q) = g(q)$ for all complex numbers $q$, and also we have the equality $f(x) = g(x)$ as polynomials. This will be used in the rest of the paper without mentioning about this.

\section{Hypergraphs and marked chromatic polynomials}
\noindent
In this section, we recall some basic definitions of hypergraphs. 

\subsection{}
Let $n$ be a positive integer. The pair $\mathcal{G} = (\mathcal{V}, \mathcal{E})$ is called a \textit{hypergraph}, 
where we take the set of \textit{vertices} $\mathcal{V} = \textbf{[}n \textbf{]}$ and the set of \textit{edges} is just a collection of subsets of $\textbf{[}n \textbf{]}$ (possibly empty collection), i.e., $\mathcal{E}\subseteq P(\textbf{[}n \textbf{]})$. %To emphasize the underlying hypergraph, we denote the vertex set and edge set as $\mathcal{V}$ and $\mathcal{E}$, respectively; otherwise, we simply write $\textbf{[}n \textbf{]}$ and $E$. 
%\textit{Through out this paper we assume that the hypergraph $\mathcal{G}$ satisfy the condition that $|e|\ge 2$ for all $e\in \mathcal{E}$.}

\begin{defn}
    \begin{enumerate}
        \item  A hypergraph $\mathcal{G}$ is called \textit{simple} if for any $e, f \in \mathcal{E}$, we have $|e| \geq 2$ and $e \subseteq f$ implies $e = f$.
       % \textit{Through out this paper we assume that the hypergraph $\mathcal{G}$ is simple.} 
       \medskip
       \item A hypergraph $\mathcal{G}$ is called \textit{graph} if $|e|=2$ for all $e\in \mathcal{E}$. Note that if 
      $\mathcal{G}$ is a simple graph then it  has no loops and multiple edges. 
        \medskip
        \item For any non-empty subset $U \subseteq \mathcal{V}$, the \textit{induced hypergraph} on $U$, denoted $\mathcal{G}_U$, is the sub-hypergraph with vertex set $\mathcal V(\mathcal{G}_U) = U$ and edge set $\mathcal  E(\mathcal{G}_U)$ consisting of all edges of $\mathcal{G}$ that are entirely contained in $U$.\medskip
        %\item  A hypergraph $\mathcal{G}$ is called \textit{$d$-uniform} if every edge $e \in \mathcal  E$ has exactly $d$ elements, i.e., $|e| = d$. A \textit{uniform hypergraph} refers to a hypergraph that is $d$-uniform for some $d \in \mathbb{N}$.\medskip
        \item  Two distinct vertices $u, v \in \mathcal{V}$ are called \textit{neighbors} in $\mathcal{G}$ if there exists an edge $e \in \mathcal{E}$ such that $u, v \in e$. For any vertex $v \in \mathcal{V}$, the \textit{neighborhood} of $v$ is defined as
\[ N_\mathcal{G}(v) = \{ u \in \mathcal{V} \mid u \text{ is a neighbor of } v \}. \]
If $N_\mathcal{G}(v) = \emptyset$, then $v$ is called an \textit{isolated vertex}. The \textit{closed neighborhood} of $v$ is given by $N_\mathcal{G}[v] = N(v) \cup \{v\}$.\medskip
\item  A subset $I \subseteq \mathcal{V}$ is called \textit{independent} if no edge of $\mathcal{G}$ is entirely contained in $I$, i.e., $e \not\subseteq I$ for all $e \in \mathcal{E}$. If $\mathcal{G}$ is a simple hypergraph, then both $\emptyset$ and singleton sets $\{v\}$ for $v \in \mathcal{V}$ are independent subsets. The set of all independent subsets of $\mathcal{G}$ is denoted as $I(\mathcal{G})$

    \end{enumerate}
\end{defn}

\begin{defn}
The \textit{(multivariate) independence polynomial} of the hypergraph $\mathcal{G}$ is defined as
\[ I(\mathcal{G}, x) = \sum_{I \in I(\mathcal{G})} \prod_{v \in I} x_v.\] Note that $I(\mathcal{G}, x)$ is a polynomial in the variables $\{x_v \mid v \in \mathcal{V}\}$. Additionally, for any integer $q \in \mathbb{Z}$,  $I(\mathcal{G}, x)^q$ belongs to $\tilde{R}_n$.
\end{defn}

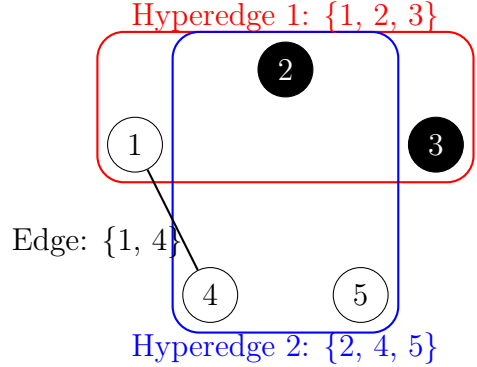
\begin{figure}[h]
    \centering
    \begin{tikzpicture}
        % Define vertices with circles
        \node[draw, circle] (1) at (0,2) {1};
        \node[draw, circle, fill=black, text=white] (2) at (2,3) {2};
        \node[draw, circle, fill=black, text=white] (3) at (4,2) {3};
        \node[draw, circle] (4) at (1,0) {4};
        \node[draw, circle] (5) at (3,0) {5};
        
        % Draw hyperedges as enclosed regions
        \draw[rounded corners=10pt, red, thick] (-0.5,1.5) -- (4.5,1.5) -- (4.5,3.5) -- (-0.5,3.5) -- cycle;
        \draw[rounded corners=10pt, blue, thick] (0.5,-0.5) -- (3.5,-0.5) -- (3.5,3.5) -- (0.5,3.5) -- cycle;
        
        % Draw a standard edge
        \draw[thick] (1) -- (4);
    
        % Labels for hyperedges
        \node[red] at (2,3.7) {Hyperedge 1: \{1, 2, 3\}};
        \node[blue] at (2,-0.7) {Hyperedge 2: \{2, 4, 5\}};
        \node at (-0.5,0.7) {Edge: \{1, 4\}};
    \end{tikzpicture}
    \caption{Hypergraph with hyperedges and a standard edge; the vertices 2 and 3 are special}
    \label{fig:hypergraph1}
\end{figure}

\subsection{} We choose $\mathcal{V}^{\mathrm{sp}}\subseteq \mathcal{V}$ and declare them as special vertices, and we allow the special vertices to repeat as many times as possible (but only finitly many times) in 
independent sets. This gives us a new collection of marked-independent subsets of $\mathcal{G}$. More precisely, we have the following:

\begin{defn}
 An element $U\in P^{\mathrm{mult}}(\mathcal{V})$ is called \textit{marked-independent} if
    \begin{enumerate}
        \item     the underlying set of $U$ is independent, equivalently $e\nsubseteq U$ for any $e\in \mathcal{E}$,
\item  the vertices from $\mathcal{V}\backslash \mathcal{V}^{\mathrm{sp}}$ appear at most once in $U$.
   \end{enumerate}
\end{defn}

\begin{defn}
 \begin{enumerate}
\item We denote by $I^{\mathrm{mark}}({\mathcal{G}})$ the set of all marked-independent subsets of $\mathcal{G}$. We understand $\emptyset\in I^{\mathrm{mark}}(\mathcal{G})$. If $\mathcal{G}$ is simple hypergraph then we have 
$\{v\}\in I^{\mathrm{mark}}(\mathcal{G})$ for all $v\in \mathcal{V}$ and $\{v, \dots, v\}\in I^{\mathrm{mark}}(\mathcal{G})$, where $v$ appears $m$-times, for all $m\ge 0$ and $v\in \mathcal{V}^{\mathrm{sp}}.$
\medskip
\item Given $U\in I^{\mathrm{mark}}(\mathcal{G})$, set $x(U) = \prod_{v\in \mathcal{V}}x_v^{r_v}$, where $v\in \mathcal{V}$ appears $r_v$ times in $U$.  
The marked (multivariate) independence series of the hypergraph $\mathcal{G}$ is defined as
$$I^{\mathrm{mark}}(\mathcal{G},x)=\sum_{U\in I^{\mathrm{mark}}(\mathcal{G})} x(U).$$
When $\mathcal{V}^{\mathrm{sp}} =\emptyset$, then we have $I^{\mathrm{mark}}(\mathcal{G},x) =I(\mathcal{G},x)$.
\end{enumerate}
\end{defn}

\begin{example}
Consider the hypergraph in Figures \ref{fig:hypergraph1}. The vertices $\mathcal{V}$ are $\{1, 2, 3, 4, 5\}$ and the special vertices are $\{2, 3\}$. 
The edges are $\mathcal{E} = \{e_1, e_2, e_3\}$ where $e_1 = \{1, 2, 3\}$, $e_2 = \{2, 4, 5\}$, and $e_3 = \{1, 4\}$. Note that 
$e_1$ and $e_2$ are hyperedges, and $e_3$ is a standard edge. 

\begin{align*}
    I^{\mathrm{mark}}(\mathcal{G},x) = & \ 1 +x_1 +\frac{x_2}{1-x_2} +\frac{x_3}{1-x_3} + x_4 + x_5 
     + x_1\frac{x_2}{1-x_2} + x_1\frac{x_3}{1-x_3}\\ & + x_1x_5 + \frac{x_2}{1-x_2}\frac{x_3}{1-x_3} 
    + \frac{x_2}{1-x_2}x_4 + \frac{x_2}{1-x_2}x_5 + \frac{x_3}{1-x_3}x_4 \\ & + \frac{x_3}{1-x_3}x_5 + x_4x_5 +x_1\frac{x_2}{1-x_2}x_5+\frac{x_2}{1-x_2}\frac{x_3}{1-x_3}x_5+
    x_1\frac{x_3}{1-x_3}x_5\\&+\frac{x_2}{1-x_2}\frac{x_3}{1-x_3}x_4+\frac{x_3}{1-x_3}x_4x_5.
\end{align*}

\end{example}

\medskip

\section{Marked colorings and marked chromatic polynomials}

\subsection{}
We generalize the vertex coloring of hypergraphs as in \cite{CKV2025} by allowing the colors to repeat for special vertices. We call this new coloring of hypergraphs as marked coloring. 

\begin{defn} Given $q\in\mathbb{N}$ we define the following.
\begin{enumerate}
        \item We call a map $_{\mathbf{m}}\Gamma^{\mathrm{mark}}_\mathcal{G}:  \mathcal{V} \rightarrow P^{\mathrm{mult}}(\{1,\dots,q\})$ a $\mathcal{V}^{\mathrm{sp}}$-marked or (simply marked) multi-coloring of $\mathcal{G}$ associated to $\bold{m}$ using at most $q$-colors if the following conditions are satisfied:\vspace{0,1cm}
        
        \begin{enumerate}[(i)]
            \item $_{\bold{m}}\Gamma^{\mathrm{mark}}_\mathcal{G}(v)$ is a subset of $\{1,\dots,q\}$ for all $v\in  \mathcal{V}\backslash  \mathcal{V}^{\mathrm{sp}}$,\vspace{0,1cm}
            \item for all $v\in \mathcal{V}$ we have $|_{\mathbf{m}}\Gamma^{\mathrm{mark}}_\mathcal{G}(v)|=m_v$,\vspace{0,1cm}

             \item for all $e\in \mathcal{E}$, we have that $\bigcap\limits_{v\in e}\, _{\mathbf{m}}\Gamma^{\mathrm{mark}}_\mathcal{G}(v)=\emptyset$.
            
           % \item for all $e\in \mathcal{E}$, there exists $u, v\in e$ such that $_{\mathbf{m}}\Gamma^{\mathrm{mark}}_\mathcal{G}(u)\cap {_{\mathbf{m}}\Gamma^{\mathrm{mark}}_\mathcal{G}(v)}=\emptyset$.
            
        \end{enumerate}
         \vspace{0,1cm}
        \medskip
        \item We say that two given $_{\bold{m}}\Gamma^{\mathrm{mark}}_\mathcal{G}$ and $_{\bold{m}}\Gamma'^{\mathrm{mark}}_\mathcal{G}$ proper vertex marked multi-coloring  of $\mathcal{G} $ associated to $\bold{m}$ using at most $q$-colors \textit{distinct} if
        $_{\bold{m}}\Gamma^{\mathrm{mark}}_\mathcal{G}(v)\neq _{\bold{m}}\Gamma'^{\mathrm{mark}}_\mathcal{G}(v)$ 
        for some $v\in \mathcal{V}$.
        \item The number of marked multi-colorings of $\mathcal{G}$ associated to $\bold{m}$  using at most $q$-colors %is a polynomial in $q$, called the \textit{super-generalized chromatic polynomial} of $\mathcal{G}$ associated to $\bold{m}$, and it 
        is denoted by $_{\bold{m}}\Pi^{\mathrm{mark}}_\mathcal{G}(q)$. We will see that $_{\bold{m}}\Pi^{\mathrm{mark}}_\mathcal{G}(q)$ is a polynomial in $q$ and we call it as marked-chromatic polynomial of $\mathcal{G.}$
        %When $\mathbf{m} = (1, \ldots, 1)$, then we simply use $\Pi^{\mathrm{mark}}_\mathcal{G}(q)$ to denote the chromatic polynomial $_{\mathbf{1}}\Pi^{\mathrm{mark}}_\mathcal{G}(q)$.
    \end{enumerate}
   
\end{defn}

\medskip
\noindent
\textbf{
Notation:} When $\mathbf{m} = (1, \ldots, 1)$, then we simply use 
        $\Pi^{\mathrm{mark}}_\mathcal{G}(q)$ to denote the chromatic polynomial $_{\mathbf{1}}\Pi^{\mathrm{mark}}_\mathcal{G}(q)$. If the set of special vertices $\mathcal{V}^{\mathrm{sp}}$ is empty then we drop "mark" and
        use $_{\bold{m}}\Pi_\mathcal{G}(q)$ instead of $_{\bold{m}}\Pi^{\mathrm{mark}}_\mathcal{G}(q)$ to insists it refers to usual vertex multi-coloring. In particular,
        $\Pi_\mathcal{G}(q)$ denotes the usual chromatic polynomial of $\mathcal{G}.$

\subsection{}
We have the following definition of partitions of $\mathcal{G}$ into marked-independent sets. 
\begin{defn}
We denote by ${P}_k^{\mathrm{mult}}(\mathcal{G}, \mathbf{m})$ the set of all ordered $k$-tuples $(P_1,\dots,P_k)$ satisfying:
\begin{enumerate}
\item $P_r$ is a non-empty multi-set and $P_r\in I^{\mathrm{mark}}({\mathcal{G}})$ for all $1\leq r\leq k$, \medskip
    \item the disjoint union (as multisets) of $P_1,\dots, P_k$ is equal to the multiset
    $$\{\underbrace{v, \dots, v}_{m_v\text{-times}} : v\in\mathrm{supp}(\bold{m})\}.$$
\end{enumerate}
We can write the number of marked multi-colorings  of $\mathcal{G}$ associated to $\bold{m}$  using at most $q$-colors as follows:
\end{defn}
\begin{prop}\label{props6}
For each $q\in \mathbb{N}$ and $\mathbf{m}\in\mathbb{Z}_{\ge 0}^n$ with non-empty support, we have
    \begin{equation}
    \label{equation1}
    _{\bold{m}}\Pi^{\mathrm{mark}}_\mathcal{G}(q)=\sum_{k\geq 1}| {P}_k^{\mathrm{mult}}(\mathcal{G}, \mathbf{m})| \binom{q}{k}.
\end{equation} 
In particular, $_{\bold{m}}\Pi^{\mathrm{mark}}_\mathcal{G}(q)$ is a polynomial in $q$ and we can specialize $q$ as any complex number. 
\end{prop}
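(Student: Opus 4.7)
The strategy is the standard bijective one for chromatic polynomials, adapted to the marked multi-set setting: decompose a marked multi-coloring into its unordered collection of nonempty color classes, show each class is a marked-independent multi-set whose disjoint union recovers the target multi-set, and then count via the standard $k!$-overcount trick after choosing an ordering and a labeling by colors.

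First, given a marked multi-coloring ${}_{\mathbf{m}}\Gamma^{\mathrm{mark}}_{\mathcal{G}}$ of $\mathcal{G}$ using at most $q$ colors, for each color $c \in \{1, \ldots, q\}$ I would define a multi-set $P_c \in P^{\mathrm{mult}}(\mathcal{V})$ in which vertex $v$ has multiplicity equal to the multiplicity of $c$ in ${}_{\mathbf{m}}\Gamma^{\mathrm{mark}}_{\mathcal{G}}(v)$. Two checks are needed: (i) each non-empty $P_c$ lies in $I^{\mathrm{mark}}(\mathcal{G})$, since if some edge $e$ were contained in its underlying vertex set, then $c$ would belong to $\bigcap_{v \in e} {}_{\mathbf{m}}\Gamma^{\mathrm{mark}}_{\mathcal{G}}(v)$, contradicting condition (iii) of the definition, while for $v \notin \mathcal{V}^{\mathrm{sp}}$ the color-set is an ordinary set so $v$ has multiplicity at most one in $P_c$; (ii) the multi-set disjoint union of the non-empty $P_c$'s coincides with $\{v, \ldots, v : v \in \mathrm{supp}(\mathbf{m})\}$ (where $v$ appears $m_v$ times), since the total multiplicity of $v$ across all $P_c$ equals $\sum_{c=1}^{q}(\text{multiplicity of } c \text{ in } {}_{\mathbf{m}}\Gamma^{\mathrm{mark}}_{\mathcal{G}}(v)) = |{}_{\mathbf{m}}\Gamma^{\mathrm{mark}}_{\mathcal{G}}(v)| = m_v$.

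Next, I would invert the construction: from an ordered tuple $(P_1, \ldots, P_k) \in P_k^{\mathrm{mult}}(\mathcal{G}, \mathbf{m})$ and an injection $\phi : \{1, \ldots, k\} \hookrightarrow \{1, \ldots, q\}$, define a marked multi-coloring by declaring the multiplicity of $\phi(r)$ in the color-set of $v$ to equal the multiplicity of $v$ in $P_r$; a direct check confirms all three defining conditions (injectivity of $\phi$ together with non-special vertices appearing at most once in each $P_r$ ensures the color-set is a genuine subset at such $v$, while the edge condition follows from the marked-independence of each $P_r$). The forward and backward constructions are mutually inverse up to the diagonal $S_k$-action that simultaneously permutes the components of $(P_1, \ldots, P_k)$ and pre-composes $\phi$, so the map to marked multi-colorings using exactly $k$ colors is precisely $k!$-to-one.

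Consequently, the number of marked multi-colorings using exactly $k$ colors is
$$\frac{|P_k^{\mathrm{mult}}(\mathcal{G}, \mathbf{m})| \cdot q(q-1) \cdots (q-k+1)}{k!} = |P_k^{\mathrm{mult}}(\mathcal{G}, \mathbf{m})| \binom{q}{k},$$
and summing over $k \ge 1$ produces identity \eqref{equation1}. Since $P_k^{\mathrm{mult}}(\mathcal{G}, \mathbf{m}) = \emptyset$ for $k > |\mathbf{m}|$, the sum is finite, so ${}_{\mathbf{m}}\Pi^{\mathrm{mark}}_{\mathcal{G}}(q)$ is a polynomial in $q$, and the observation on polynomial identities recorded earlier allows us to specialize $q$ to any complex number. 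The main obstacle I anticipate is purely bookkeeping: tracking multiplicities consistently on both sides of the correspondence, while keeping clean the distinction between special vertices (whose color-sets are genuine multi-sets) and non-special vertices (whose color-sets are ordinary sets). Once that is laid out carefully, the $k!$-overcount step is routine.
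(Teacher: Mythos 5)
Your proposal is correct and follows essentially the same route as the paper: both set up the bijection between marked multi-colorings using exactly $k$ colors and ordered tuples in $P_k^{\mathrm{mult}}(\mathcal{G},\mathbf{m})$ paired with a choice of $k$ colors, the only cosmetic difference being that the paper fixes an increasing $k$-subset $\{c_1<\cdots<c_k\}$ to order the color classes while you use arbitrary injections and divide by the diagonal $S_k$-action, yielding the identical count $(q)_k/k!=\binom{q}{k}$. Your verification of the two conditions (marked-independence of each class and recovery of the target multiset) matches the paper's, so no gap remains.
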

\begin{proof}

We fix a subset of colors \(\{c_1 < \dots < c_k\} \subseteq \{1, \dots, q\}\) and consider marked multi-colorings \({}_{\mathbf{m}}\Gamma^{\mathrm{mark}}_\mathcal{G}\) that use only these selected colors to color the vertices of \(\mathcal{G}\). 
Given such a marked multi-coloring \({}_{\mathbf{m}}\Gamma^{\mathrm{mark}}_\mathcal{G}\), we can naturally associate an element \((P_1, \dots, P_k) \in P_k^{\mathrm{mult}}(\mathcal{G}, \mathbf{m})\) as follows. Let \(a_{v, r}\) denote the multiplicity of color \(c_r\) in the coloring of vertex \(v\), i.e., in \({}_{\mathbf{m}}\Gamma^{\mathrm{mark}}_\mathcal{G}(v)\). We then define
\[
P_r = \{v^{a_{v,r}} : v \in \mathcal{V}\}, \quad 1 \le r \le k.
\]
\noindent
By construction, each \(P_r\) is a non-empty multiset, and the collection \(\{P_r\}_{r=1}^k\) forms a partition of the multiset \(\{v^{m_v} : v \in \mathrm{supp}(\mathbf{m})\}\). Moreover, it is evident from the definition that \(a_{v,r} \le 1\) for any non-special vertex \(v \in \mathcal{V} \setminus \mathcal{V}^{\mathrm{sp}}\).

\medskip
\noindent
Now, suppose \(P_r \notin I^{\mathrm{mark}}(\mathcal{G})\). Then the underlying set of \(P_r\) must be non-independent; that is, there exists an edge \(e \in \mathcal{E}\) such that \(e \subseteq P_r\). This implies that all vertices in \(e\) receive the same color \(c_r\), contradicting the definition of the marked multi-coloring \({}_{\mathbf{m}}\Gamma^{\mathrm{mark}}_\mathcal{G}\). Hence, each \(P_r\) belongs to \(I^{\mathrm{mark}}(\mathcal{G})\), and we conclude that \((P_1, \dots, P_k) \in P_k^{\mathrm{mult}}(\mathcal{G}, \mathbf{m})\). Conversely, given any multi-partition \((P_1, \dots, P_k) \in P_k^{\mathrm{mult}}(\mathcal{G}, \mathbf{m})\), we assign the color \(c_r\) to the vertices in \(P_r\), counted with multiplicity, for each \(1 \le r \le k\).
For any $e=\{v_{i_1}, \ldots, v_{i_p}\} \in \mathcal E$, note that $e \nsubseteq P_j $ for any $1 \leq j \leq k$, i.e., all vertices of $e$ can't receive the same color. Thus $\bigcap\limits_{ v \in e} {}_{\mathbf m} \Gamma^{\mathrm{mark}}_{\mathcal G}(v)= \emptyset. $
This yields a marked multi-coloring of \(\mathcal{G}\), and the correspondence described above is bijective.

\end{proof}

\medskip
\noindent
The following is the main result, which gives the interpretation of $I^{\mathrm{mark}}(\mathcal{G},x)^q[x^\mathbf{m}]$ in terms of 
marked-chromatic polynomials. 
\begin{thm}\label{expmarkedindp}
    Let $\mathcal{G}$ be a marked hypergraph with vertex set $\mathcal{V}$ and special set of vertices $\mathcal{V}^{\mathrm{sp}}$. For $q\in \mathbb{Z}$, we have as formal series
    $$I^{\mathrm{mark}}(\mathcal{G},x)^q=\sum_{\mathbf{m}\in \mathbb{Z}_{\ge 0}^{n}} {_{\bold{m}}\Pi^{\mathrm{mark}}_\mathcal{G}(q)}\ x^\mathbf{m}.$$
\end{thm}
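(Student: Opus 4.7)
The plan is to reduce this theorem to the combination of Proposition~\ref{keyprop} and Proposition~\ref{props6}, by recognizing that a marked hypergraph fits into the abstract set-system framework of Section~2 via the substitution $\mathcal{A}=I(\mathcal{G})$ (the collection of independent subsets of $\mathcal{G}$) and $S=\mathcal{V}^{\mathrm{sp}}$. With this choice both $\emptyset\in\mathcal{A}$ and $\bigcup_{A\in\mathcal{A}}A=\mathcal{V}$ hold (every singleton is independent in a simple hypergraph), so $I_S(\mathcal{A},x)$ is defined and all the machinery of Section~2 applies.

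The first step is to establish the identity of generating series
\[
I_S(I(\mathcal{G}),x)\ =\ I^{\mathrm{mark}}(\mathcal{G},x).
\]
This is a direct bookkeeping: a marked-independent multiset $U\in I^{\mathrm{mark}}(\mathcal{G})$ is uniquely specified by its underlying independent set $A\in I(\mathcal{G})$ together with a choice of positive integer multiplicities $r_i\ge 1$ for the special vertices $i\in A\cap\mathcal{V}^{\mathrm{sp}}$, while non-special vertices of $A$ appear with multiplicity exactly one. Grouping the sum defining $I^{\mathrm{mark}}(\mathcal{G},x)$ by the underlying set and evaluating the geometric series $\sum_{r\ge 1}x_i^r=x_i/(1-x_i)$ for each special vertex $i\in A\cap\mathcal{V}^{\mathrm{sp}}$ produces exactly the summand of $I_S(I(\mathcal{G}),x)$ indexed by $A$.

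The second step is to match the two families of partition-like objects. Unwinding the four defining conditions of $P_k^S(\mathcal{A},\mathbf{m})$ with $\mathcal{A}=I(\mathcal{G})$ and $S=\mathcal{V}^{\mathrm{sp}}$ shows that they coincide verbatim with the defining conditions of $P_k^{\mathrm{mult}}(\mathcal{G},\mathbf{m})$: both parametrize ordered $k$-tuples of non-empty multisets of $\mathcal{V}$ whose underlying sets are independent, in which non-special vertices appear at most once, and whose multiset disjoint union equals $\{v^{m_v}:v\in\mathrm{supp}(\mathbf{m})\}$. Hence $|P_k^S(I(\mathcal{G}),\mathbf{m})|=|P_k^{\mathrm{mult}}(\mathcal{G},\mathbf{m})|$. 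Combining this identification with the two propositions yields, for every $q\in\mathbb{N}$ and $\mathbf{m}\ge 0$,
\[
I^{\mathrm{mark}}(\mathcal{G},x)^q[x^{\mathbf{m}}]\ =\ \sum_{k\ge 0}\binom{q}{k}\,|P_k^S(I(\mathcal{G}),\mathbf{m})|\ =\ \sum_{k\ge 0}\binom{q}{k}\,|P_k^{\mathrm{mult}}(\mathcal{G},\mathbf{m})|\ =\ {}_{\mathbf{m}}\Pi^{\mathrm{mark}}_{\mathcal{G}}(q).
\]
For each fixed $\mathbf{m}$ both sides are polynomials in $q$ (Proposition~\ref{keyprop} on the left, Proposition~\ref{props6} on the right), so by the Observation following Proposition~\ref{keyprop} the equality extends from $q\in\mathbb{N}$ to all $q\in\mathbb{Z}$, which is precisely the claimed identity of formal power series. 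The proof is essentially a bookkeeping reduction; there is no real obstacle, the only point requiring mild care being the verification in step one that the grouping by underlying set yields exactly the geometric-series factors appearing in $I_S(I(\mathcal{G}),x)$.
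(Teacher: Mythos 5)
Your proposal is correct and follows essentially the same route as the paper: the paper's proof of Theorem~\ref{expmarkedindp} performs the binomial expansion $f^q=\sum_k\binom{q}{k}(f-1)^k$ directly on $f=I^{\mathrm{mark}}(\mathcal{G},x)$, identifies $(f-1)^k[x^{\mathbf m}]$ with $|P_k^{\mathrm{mult}}(\mathcal{G},\mathbf m)|$, and concludes via Proposition~\ref{props6} and the polynomial-identity observation, exactly as you do. Your only (harmless) variation is to outsource that expansion to Proposition~\ref{keyprop} through the identification $I^{\mathrm{mark}}(\mathcal{G},x)=I_S(I(\mathcal{G}),x)$ with $S=\mathcal{V}^{\mathrm{sp}}$ and $P_k^S(I(\mathcal{G}),\mathbf m)=P_k^{\mathrm{mult}}(\mathcal{G},\mathbf m)$, an identification the paper itself uses (in the reverse direction) in its section on independence systems.
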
 
\begin{proof} 
    For any $f\in \tilde{R}_n$ with constant term $1$, we have 
    $$f^q=\sum_{k\geq 0}\binom{q}{k}(f-1)^k,$$
   % where we understand $\binom{-q}{k}=(-1)^k\binom{q+k-1}{k}$ for $q\in\mathbb{N}$.
    Set $f=I^{\mathrm{mark}}(\mathcal{G},x)$ and note that 
    $$f-1 = I^{\mathrm{mark}}(\mathcal{G},x)-1=\sum_{U\in \mathcal{I}^{\mathrm{mult}}(\mathcal{G})\setminus \{\emptyset\}} \prod_{v\in U}x_v.$$
    Thus, for $k\geq 1$ and $\mathbf{m}\neq 0$, the coefficient $(f-1)^k[x^\mathbf{m}]$ is given by the sets $\{U_1, U_2, \ldots, U_k\}$ satisfying the following conditions:
    \begin{enumerate}
        \item $U_r$ is multi-set and $U_r \in I^{\mathrm{mark}}(\mathcal{G})\setminus \{\emptyset\}$ for all $1 \leq r \leq k;$
        \item the disjoint union (as multisets) of $U_1, \ldots, U_k$ is equal to the mutiset $$\{\underbrace{v, \ldots,v}_{m_v\text{-times}} : v\in\mathrm{supp}(\bold{m})\}.$$
    \end{enumerate}
    The cardinality of such sets $\{U_1, \ldots, U_k\}$ satisfying the above two conditions is $|P_k^{\mathrm{mult}}(\mathcal{G}, \mathbf{m})|$,  and we get with Proposition~\ref{props6}:
$$I^{\mathrm{mark}}(\mathcal{G},x)^q[x^\mathbf{m}]=\sum_{k\geq 1}\binom{q}{k}(f-1)^k[x^\mathbf{m}]=\sum_{k\geq 1}\binom{q}{k}|P_k^{\mathrm{mult}}(\mathcal{G}, \mathbf{m})|={_{\bold{m}}\Pi^{\mathrm{mark}}_\mathcal{G}(q)},\  \text{for all $\mathbf{m}\neq 0$}.$$ Since the above equality holds for all $q\in\mathbb Z$, we have $I^{\mathrm{mark}}(\mathcal{G},x)^q[x^\mathbf{m}]={_{\bold{m}}\Pi^{\mathrm{mark}}_\mathcal{G}(q)}$
as polynomials in $q.$
This completes the proof; the coefficients for $\mathbf{m}=0$ clearly coincide.
\end{proof}

\begin{example}\label{impexample}
Consider a hypergraph $\mathcal G$ with vertex set $\mathcal V=\textbf{[}n \textbf{]}$ and the edge set $\mathcal E=\{ e\}$ where $e=\textbf{[}n \textbf{]}$, and set $\mathcal V^{\mathrm{sp}}=\emptyset$. Then the independence polynomial of the hypergraph $\mathcal{G}$ is
    $$ I(\mathcal G, x)=  
    \sum\limits_{\substack{A\subseteq \textbf{[}n \textbf{]}\\ A\neq \textbf{[}n \textbf{]}}} \left(\prod_{i\in A}x_i\right) =  \left(\prod\limits_{i \in \textbf{[}n\textbf{]}}(1+x_i)\right)- (x_1 x_2 \ldots x_n)$$
    Set $\zeta:= \prod\limits_{i \in \textbf{[}n \textbf{]}}(1+x_i).$ Then we have 
    $$\zeta^q= \sum\limits_{0 \leq a_1, \dots, a_n \leq q}  \binom{q}{a_1} \binom{q}{a_2} \ldots \binom{q}{a_n} x_1^{a_1} x_2^{a_2} \ldots x_n^{a_n}$$ and $$
      I(\mathcal G, x)^q=  (\zeta- x_1 x_2 \ldots x_n)^q
      =  \sum\limits_{k=0}^{q} (-1)^k \binom{q}{k} \zeta^{q-k} (x_1 x_2 \ldots x_n)^k$$ $$
  \begin{aligned}   I(\mathcal G, x)^q \,[x_1^{m_1} x_2^{m_2} \ldots x_n^{m_n}] = &  \sum\limits_{k=0}^{q} (-1)^k \binom{q}{k} \zeta^{q-k}[x_1^{m_1-k} x_2^{m_2-k} \ldots x_n^{m_n-k}]\\
      = & \sum\limits_{k=0}^{\min\{m_1, m_2, \ldots, m_n\}} (-1)^k \binom{q}{k} \binom{q-k}{m_1-k} \binom{q-k}{m_2-k}
 \ldots \binom{q-k}{m_n-k}    \end{aligned}$$
 Thus we get $$ {_{\bold{m}}\Pi^{\mathrm{mark}}_\mathcal{G}(q)}=\sum\limits_{k=0}^{\min\{m_1, m_2, \ldots, m_n\}} (-1)^k \binom{q}{k} \binom{q-k}{m_1-k} \binom{q-k}{m_2-k}
 \ldots \binom{q-k}{m_n-k} $$ as a corollary using Theorem \ref{expmarkedindp}.
\end{example}
\subsection{}  Let $G =(V, E)$ be a simple graph with vertex set $\textbf{[}n \textbf{]}$ and edge set $E.$

\begin{figure}[ht]
    \centering
    \begin{tikzpicture}
    \tikzstyle{B}=[circle,draw=black!80,fill=black!80,thick]
    %\tikzstyle{W}=[circle,draw=white!60,fill=white!60,thick]
    \node[B] (1) at (0,0) [label=below:$n-1$]{};
    \node[B] (2) at (1.5,0) [label=below:$3$]{};
    \node[B] (3) at (-0.8,1.5) [label=left:$n$]{};
    \node[B] (4) at (2.3,1.5) [label=right:$2$]{};
    \node[B] (5) at (0.75,2.7) [label=right:$1$]{};
    \path[-] (2) edge node[left]{} (4);
    \path[-] (1) edge node[left] {} (3);
    \path[-] (4) edge node[left] {} (5);
    \path[-] (3) edge node[left] {} (5);
    %\draw[dotted] (3) -- (4);
    \path (1) -- node[auto=false]{\ldots} (2);
\end{tikzpicture}
    \caption{Cycle $C_n$}
    \label{C_n}
\end{figure}
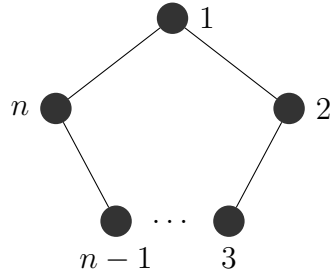
\begin{defn}
   We say that $G$ is \textit{chordal} if it has no induced subgraph that is isomorphic to $C_m$, $m\ge 4,$ where
   $C_m$ is the cycle graph with $m$-vertices.    It is well-known fact that chordal graphs possess a perfect elimination ordering on the vertices. An ordering $\textbf{[}n \textbf{]} = \{1, \dots, n\}$ is called \textit{perfect elimination ordering} if for each $1\le k\le n$, we have the subgraph spanned by
$V_k = \{i : 1\le i\le k\}\cap N_G[k]$ must be a clique (complete subgraph).
\end{defn}
\begin{example}
Let $G$ be a chordal graph with the vertex set $\textbf{[}n \textbf{]}$, let the ordering on the vertices $\textbf{[}n \textbf{]} = \{1, \ldots, n\}$ is a perfect elimination ordering of the vertices of $\mathcal{G}$.
For $\mathbf{m}\ge 0$
with $\mathrm{supp}(\mathbf{m}) = \{i_1 < \cdots < i_N\}$, induced from the perfect elimination ordering of $\textbf{[}n \textbf{]}$, we have
 \[
{_{\mathbf{m}}\Pi_G(q)} =\prod\limits_{r=1}^{N} \binom{q-a_r(\mathbf{m})+m_{i_r}}{m_{i_r}}
\]
where $a_r(\mathbf{m})=m_{i_r}+\sum\limits_{\substack{1\le s <r\\ s\in N_G(i_r)}} m_{i_s}$,
for $1\le r \le N$. 

\end{example}

\begin{example} For the cycle graph $C_n$, we have the following formula by Read (see \cite{Read})
\medskip
$$   {_{\mathbf{m}}\Pi_{C_n}(q)}= \left(\prod_{r=1}^{n}(q)_{(m_r+m_{r+1})}\right)\times
    \left(\sum\limits_{k=0}^n\Big((-1)^{kn}v_k(q)\prod_{i=1}^n\frac{(m_i)^k}{(q)^{m_i+k}}\Big)\right)
$$
where $m_{n+1}=m_1$ \iffalse
\[
\pi_{C_n}^{\mathbf{m}}(q) = (q)_{(m_1+m_2)}(q)_{(m_2+m_3)}\cdots(q)_{(m_{n-1}+m_n)}(q)_{(m_n+m_1)}\times\sum\limits_{k=0}^n\Big\{(-1)^{kn}v_k(q)\prod_{i=1}^n\frac{(m_i)^k}{(q)^{m_i+k}}\Big\}
\]
\fi
and $v_k(q)=\binom{q}{k}-\binom{q}{k-1}$ and $(q)_k=q(q-1)(q-2)\cdots(q-k+1)$.
\end{example}

\section{Marked vs ordinary chromatic polynomials}
\noindent
Let $\mathcal{G} = (\mathcal{V}, \mathcal{E})$ be a \textit{hypergraph} as before with \textit{vertices} $\mathcal{V} = \textbf{[}n \textbf{]}$ and special vertices $\mathcal{V}^{\mathrm{sp}}.$ There is a close relationship between marked-chromatic polynomials and ordinary chromatic polynomials of graphs, and this was first observed for chordal graphs in \cite[Theorem 2, Section 3.4]{CKV2025}).
In this section, we generalize this to all hypergraphs. 

  \begin{defn} For a non-negative integer $m$, a partition $\lambda$ of $m$, denoted as $\lambda \vdash m$, is an ordered tuple $\lambda = (\lambda_1\ge \cdots \ge \lambda_k>0)$ of non-negative integers such that $\sum_{i=1}^k \lambda_i =m$, and the length of the partition $\ell(\lambda)$ is defined to be $k$. For any $j\in \mathbb{N}$, let $d^\lambda_{j}$ be the number appearances of $j$ in the partition $\lambda$.

\medskip
\noindent
Given a tuple $\bold{m} = (m_i)_{i \in  \textbf{[}n \textbf{]}}$ of non-negative integers, we define $$S(\bold m)=\{ \underline{\lambda}=(\boldsymbol{\lambda_i})_{i \in  \textbf{[}n \textbf{]}}: \boldsymbol{\lambda_i} \text{ is a partition of }m_i \text{ and } \boldsymbol{\lambda}_i=(1^{m_i})\ \forall i\notin \mathcal{V}^{\mathrm{sp}}\}.$$
For each tuple $(\underline{\lambda}, \bold m)$, we associate a hypergraph $\mathcal{G}(\underline{\lambda}, \bold m)$ as follows:
\begin{itemize}
\item For each $i \in  \textbf{[}n \textbf{]}$, replace the vertex $i$ with a clique of size $\ell(\boldsymbol{\lambda_i})$ (i.e., a complete graph of $\ell(\boldsymbol{\lambda_i})$ vertices).
\item For each hyperedge $\{i_1, \ldots, i_k\} \in \mathcal E$, include in $\mathcal{G}(\underline{\lambda},\boldsymbol{m})$  hyperedges consisting of one vertex chosen from each of the cliques corresponding to $i_1, \ldots, i_k$.
\end{itemize}
\end{defn}
\noindent
Note that $\boldsymbol{\lambda}_i$ is the empty partition for all $i\in  \textbf{[}n \textbf{]}$ with $m_i=0$.
Moreover, for $\boldsymbol{\lambda}\in S(\bold m)$, we set $ \bold s(\boldsymbol{\lambda})=(\ell(\boldsymbol{\lambda}_i) : i\in  \textbf{[}n \textbf{]})$, where the length of the empty partition is understood to be zero. 
We have the following relation between marked chromatic polynomials of hypergraph and the  ordinary chromatic polynomials of hypergraphs. 

\begin{prop}\label{ordinarychormatic} We have 
   \begin{equation}
    \label{equation}
    {_{\bold{m}}\Pi^{\mathrm{mark}}_\mathcal{G}(q)}=\sum\limits_{\boldsymbol{\lambda}\in S(\bold m)} \frac{\Pi_{\mathcal{G}(\bold s(\boldsymbol{\lambda}), \mathbf{m})}(q)}{\prod_{i\in \mathrm{supp}(\bold m)}\prod_{k=1}^{\infty}(d^{\boldsymbol{\lambda}_i}_{k}!)}
\end{equation} 
\end{prop}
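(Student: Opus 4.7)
The plan is to partition marked multi-colorings of $\mathcal{G}$ associated to $\mathbf{m}$ according to the multiplicity structure of the colors used at each vertex, and to identify each class with an appropriate number of ordinary proper colorings of the auxiliary hypergraph $\mathcal{G}(\mathbf{s}(\underline{\lambda}), \mathbf{m})$.

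First I would observe that every marked multi-coloring ${}_{\mathbf{m}}\Gamma^{\mathrm{mark}}_\mathcal{G}$ determines a tuple $\underline{\lambda}=(\boldsymbol{\lambda}_i)_{i\in\textbf{[}n\textbf{]}}$, where $\boldsymbol{\lambda}_i$ is the partition of $m_i$ recording the multiplicities of the distinct colors appearing in the multiset ${}_{\mathbf{m}}\Gamma^{\mathrm{mark}}_\mathcal{G}(i)$. Since colors cannot repeat at a non-special vertex, we have $\boldsymbol{\lambda}_i=(1^{m_i})$ for $i\notin\mathcal V^{\mathrm{sp}}$, so $\underline{\lambda}\in S(\mathbf{m})$. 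Denoting by $\mathcal{F}_{\underline{\lambda}}$ the set of marked multi-colorings of $\mathcal{G}$ of type $\underline{\lambda}$, we have
\[
{}_{\mathbf{m}}\Pi^{\mathrm{mark}}_\mathcal{G}(q) \;=\; \sum_{\underline{\lambda}\in S(\mathbf{m})} |\mathcal{F}_{\underline{\lambda}}|,
\]
so it suffices to prove, for each fixed $\underline{\lambda}\in S(\mathbf{m})$, that
\[
|\mathcal{F}_{\underline{\lambda}}| \;=\; \frac{\Pi_{\mathcal{G}(\mathbf{s}(\underline{\lambda}),\mathbf{m})}(q)}{\prod_{i\in\mathrm{supp}(\mathbf{m})}\prod_{k\ge 1} d^{\boldsymbol{\lambda}_i}_k!}.
\]

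To prove this identity I would introduce an auxiliary set $\mathcal{P}'_{\underline{\lambda}}$ consisting of pairs $(g,\mu)$ where $g$ is an ordinary proper $q$-coloring of $\mathcal{G}(\mathbf{s}(\underline{\lambda}),\mathbf{m})$ and $\mu$ assigns a positive integer to each vertex $v$ of the clique replacing $i$, with the constraint that the multiset of values of $\mu$ on each $i$-clique equals $\boldsymbol{\lambda}_i$. Send $(g,\mu)$ to the marked multi-coloring $i\mapsto\{g(v)^{\mu(v)}: v\in\mathrm{clique}_i\}$ of $\mathcal{G}$. The pair-edges inside each clique ensure that the values of $g$ on a clique are pairwise distinct, so the image multiset at $i$ has exactly $\ell(\boldsymbol{\lambda}_i)$ distinct colors with multiplicity partition $\boldsymbol{\lambda}_i$. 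The key verification is that the hyperedge condition on $g$ translates exactly to the marked coloring condition on $\mathcal{G}$: a color $c$ lies in $\bigcap_{v\in e}{}_{\mathbf{m}}\Gamma^{\mathrm{mark}}_\mathcal{G}(v)$ if and only if for every $i_j\in e$ some vertex of the $i_j$-clique has $g$-color $c$, i.e., if and only if some ``one-from-each-clique'' hyperedge of $\mathcal{G}(\mathbf{s}(\underline{\lambda}),\mathbf{m})$ is monochromatic of color $c$. Hence the above map is a well-defined surjection $\mathcal{P}'_{\underline{\lambda}}\twoheadrightarrow\mathcal{F}_{\underline{\lambda}}$.

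The proof then finishes by counting $|\mathcal{P}'_{\underline{\lambda}}|$ in two ways. Counting over $g$ first, every $g$ admits exactly $\prod_{i\in\mathrm{supp}(\mathbf{m})} \ell(\boldsymbol{\lambda}_i)!/\prod_{k\ge 1} d^{\boldsymbol{\lambda}_i}_k!$ compatible $\mu$'s, this being the number of multiplicity assignments to the $\ell(\boldsymbol{\lambda}_i)$ distinct clique colors that reproduce the partition $\boldsymbol{\lambda}_i$. Counting over $f\in\mathcal{F}_{\underline{\lambda}}$ first, every $f$ has exactly $\prod_{i\in\mathrm{supp}(\mathbf{m})}\ell(\boldsymbol{\lambda}_i)!$ preimages, corresponding to the bijections between each $i$-clique and the underlying color set of $f(i)$. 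Equating the two counts cancels the factors $\prod_i \ell(\boldsymbol{\lambda}_i)!$ and yields the required identity; summing over $\underline{\lambda}\in S(\mathbf{m})$ gives the formula.

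The main obstacle is the dictionary between the hyperedge non-monochromaticity condition for $g$ on $\mathcal{G}(\mathbf{s}(\underline{\lambda}),\mathbf{m})$ and the marked-coloring condition $\bigcap_{v\in e}\Gamma(v)=\emptyset$ on $\mathcal{G}$, especially for hyperedges of arity greater than two, where one must argue carefully that the presence of a common color on all $\Gamma(i_j)$ corresponds exactly to the existence of a monochromatic ``selector'' hyperedge. Once this translation is in place, the multiplicity bookkeeping on the fibers is routine.
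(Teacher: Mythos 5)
Your proof is correct and follows essentially the same route as the paper's: decompose the marked multi-colorings by the multiplicity type $\underline{\lambda}\in S(\mathbf{m})$, pass to ordinary proper colorings of the clique-replacement hypergraph $\mathcal{G}(\mathbf{s}(\underline{\lambda}),\mathbf{m})$, and show each marked multi-coloring of type $\underline{\lambda}$ lies under exactly $\prod_{i}\prod_{k} d^{\boldsymbol{\lambda}_i}_{k}!$ ordinary colorings. The only organizational difference is that you route the fiber count through a double count of decorated colorings $(g,\mu)$, and you are somewhat more explicit than the paper about the dictionary between the condition $\bigcap_{v\in e}\Gamma(v)=\emptyset$ on $\mathcal{G}$ and the non-monochromaticity of the selector hyperedges of $\mathcal{G}(\mathbf{s}(\underline{\lambda}),\mathbf{m})$.
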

\begin{proof}
    Let ${_{\bold{m}}\Gamma^{\mathrm{mark}}_\mathcal{G}}$ be a marked multi-coloring of the hypergraph $\mathcal{G}$ associated to $\bold{m}$ using at most $q$ colors. Then by the definition, $_{\bold{m}}\Gamma^{\mathrm{mark}}_\mathcal{G}(i)$ is a multi-subset (resp. subset) of $\{1,\dots,q\}$ if $i\in \mathcal{V}^{\mathrm{sp}}$ (resp. $i\notin \mathcal{V}^{\mathrm{sp}}$)  with $|{_{\bold{m}}\Gamma^{\mathrm{mark}}_\mathcal{G}}(i)|=m_i$.
     To each element $\boldsymbol{\lambda} = (\boldsymbol{\lambda}_i)_{i \in I} \in S(\mathbf{m})$, we associate a family of ordinary vertex colorings of the graph $\mathcal{G}(\boldsymbol{s}(\boldsymbol{\lambda}), \mathbf{m})$ as follows.
For any index $i \in I$, if $m_i = 0$, we set $\boldsymbol{\lambda}_i$ as empty partition. Otherwise, if $i \in \mathrm{supp}(\mathbf{m})$, we consider the  multiset
\[
{_{\bold{m}}\Gamma^{\mathrm{mark}}_\mathcal{G}}(i) = \{1^{b_{i,1}}, \dots, q^{b_{i,q}}\},
\]
where  $b_{i,1}, \dots, b_{i,q}$ are non-negative integers and sum to $m_i$. Then $\boldsymbol{\lambda}_i = (\lambda^1_i \ge \cdots \ge \lambda^{\ell(\boldsymbol{\lambda}_i)}_i > 0)$ is defined to be the unique partition of $m_i$ corresponding to the multiset $\{b_{i,1}, \dots, b_{i,q}\}$.
Now, consider the hypergraph $\mathcal{G}(\boldsymbol{s}(\boldsymbol{\lambda}), \mathbf m)$. For each $i \in \mathrm{supp}(\mathbf{m})$, the corresponding clique has size $\ell(\boldsymbol{\lambda}_i)$, and we fix an ordering on its vertices. For any choice of colors $c_1, \dots, c_{\ell(\boldsymbol{\lambda}_i)} \in \{1, \dots, q\}$ satisfying
\begin{equation}\label{4re}
\lambda^1_i = b_{i, c_1}, \dots, \lambda^{\ell(\boldsymbol{\lambda}_i)}_i = b_{i, c_{\ell(\boldsymbol{\lambda}_i)}},
\end{equation}
we color the first vertex of the $i$-th clique by $c_1$, the second by $c_2$, and so on. This gives an ordinary vertex coloring of the hypergraph $\mathcal{G}(\boldsymbol{s}(\boldsymbol{\lambda}), \mathbf m)$.
It is easy to see that the number of choices of colors satisfying \eqref{4re} and thus the number of ordinary colorings we get is exactly $\prod_{i\in \mathrm{supp}(\bold m)}\prod_{k=1}^{\infty}(d^{\boldsymbol{\lambda}_i}_{k})!$,
where $d_k^{\boldsymbol{\lambda}_i}$ denotes the multiplicity of the part $k$ in the partition $\boldsymbol{\lambda}_i$.

\medskip
\noindent
Conversely, let $\boldsymbol{\lambda} = (\boldsymbol{\lambda}_i)_{i \in I} \in S(\mathbf{m})$, and fix an ordering of the vertices of the hypergraph $\mathcal{G}(\boldsymbol{s}(\boldsymbol{\lambda}), \mathbf{m})$. Suppose $\Gamma_{\mathcal{G}(\boldsymbol{s}(\boldsymbol{\lambda}), \mathbf{m})}$ is a vertex coloring of this hypergraph such that the vertices of the $i$-th clique receive colors $c_1, c_2, \dots, c_r$, in order. Then we associate to this coloring a marked multi-coloring of hypergraph $\mathcal{G}$, defined by:
\[
{_{\bold{m}}\Gamma^{\mathrm{mark}}_\mathcal{G}}(i) = \{c_1^{\lambda^1_i}, \dots, c_r^{\lambda^r_i}\}, \quad \text{where } \boldsymbol{\lambda}_i = (\lambda^1_i \ge \cdots \ge \lambda^r_i > 0).
\]
If some parts $\lambda^j_i$ are equal, then permuting the corresponding colors $c_j$ among the equal parts in the $i^{\mathrm{th}}$-clique yields the same marked multi-coloring. Hence, each marked multi-coloring corresponds to exactly
$\prod_{i\in \mathrm{supp}(\mathbf{m})}\prod_{k = 1}^\infty \left(d^{\boldsymbol{\lambda}_i}_{k}\right)!$
distinct ordinary vertex colorings of $\mathcal{G}(\boldsymbol{s}(\boldsymbol{\lambda}), \mathbf{m})$. Summing over all tuples $\boldsymbol{\lambda} \in S(\mathbf{m})$ gives the expression for the marked chromatic polynomial 
${_{\bold{m}}\Pi^{\mathrm{mark}}_\mathcal{G}}(q)$ of the hypergraph $\mathcal{G}$, and this completes the proof.
\end{proof}

\begin{example}
Let $\mathcal{G}$ be a hypergraph with vertex set $\mathcal{V} = \{1, 2, 3, 4\}$ and edge set $\mathcal{E} = \{\{1,2,3\},\; \{3,4\}\}$. Let $\mathbf{m} = (2,1,1,2)$, and suppose that $\mathcal{V}^{\mathrm{sp}} = \{1\}$.
By the definition of a marked multi-coloring of $\mathcal{G}$, the color set ${_{\mathbf{m}}\Gamma^{\mathrm{mark}}_\mathcal{G}}(1)$ is a multiset of $\{1, \dots, q\}$ because $1 \in \mathcal{V}^{\mathrm{sp}}$, while ${_{\mathbf{m}}\Gamma^{\mathrm{mark}}_\mathcal{G}}(i)$ is an ordinary subset of $\{1, \dots, q\}$ for each $i \notin \mathcal{V}^{\mathrm{sp}}$. Furthermore, the sizes of the respective color sets are determined by $\mathbf{m}$, so that:
$$
|{_{\mathbf{m}}\Gamma^{\mathrm{mark}}_\mathcal{G}}(1)| = 2, \
|{_{\mathbf{m}}\Gamma^{\mathrm{mark}}_\mathcal{G}}(2)| = 1, \
|{_{\mathbf{m}}\Gamma^{\mathrm{mark}}_\mathcal{G}}(3)| = 1, \
|{_{\mathbf{m}}\Gamma^{\mathrm{mark}}_\mathcal{G}}(4)| = 2.
$$
Thus, any marked multi-coloring of $\mathcal{G}$ associated with $\mathbf{m}$ and using at most $q$ colors must fall into one of the following two cases:

\medskip
\noindent \textbf{Case 1.} \,
$
{_{\mathbf{m}}\Gamma^{\mathrm{mark}}_\mathcal{G}}(1) = \{a,a\}, \
{_{\mathbf{m}}\Gamma^{\mathrm{mark}}_\mathcal{G}}(2) = \{b\}, \
{_{\mathbf{m}}\Gamma^{\mathrm{mark}}_\mathcal{G}}(3) = \{c\}, \
{_{\mathbf{m}}\Gamma^{\mathrm{mark}}_\mathcal{G}}(4) = \{d,e\},
$ where $a, b, c, d, e \in \{1, \dots, q\},\ d \ne e.$
Then the corresponding partitions are:
\[
\boldsymbol{\lambda}_1 = (2), \quad \boldsymbol{\lambda}_2 = (1), \quad \boldsymbol{\lambda}_3 = (1), \quad \boldsymbol{\lambda}_4 = (1,1).
\]

\noindent
In the associated hypergraph $\mathcal{G}(\boldsymbol{s}(\boldsymbol{\lambda}), \mathbf{m})$, each vertex $i$ is replaced by a clique of size $\ell(\boldsymbol{\lambda}_i)$. The resulting vertex set is
$
\{1_1, 2_1, 3_1, 4_1, 4_2\}
$ and the edge set is
$$
\{\{1_1, 2_1, 3_1\},\; \{3_1, 4_1\},\; \{3_1, 4_2\},\; \{4_1, 4_2\}\}.$$
There are two distinct colorings of this hypergraph that correspond to the given marked multi-coloring:
\[
\begin{aligned}
\tau_1&\colon 1_1 \mapsto a,\; 2_1 \mapsto b,\; 3_1 \mapsto c,\; 4_1 \mapsto d,\; 4_2 \mapsto e, \\
\tau_2&\colon 1_1 \mapsto a,\; 2_1 \mapsto b,\; 3_1 \mapsto c,\; 4_1 \mapsto e,\; 4_2 \mapsto d.
\end{aligned}
\]

\medskip
\noindent \textbf{Case 2.} The remaining case is
$$
{_{\mathbf{m}}\Gamma^{\mathrm{mark}}_\mathcal{G}}(1) = \{a,f\}, \
{_{\mathbf{m}}\Gamma^{\mathrm{mark}}_\mathcal{G}}(2) = \{b\},\
{_{\mathbf{m}}\Gamma^{\mathrm{mark}}_\mathcal{G}}(3) = \{c\}, \
{_{\mathbf{m}}\Gamma^{\mathrm{mark}}_\mathcal{G}}(4) = \{d,e\},
$$ where $a, b, c, d, e, f \in \{1, \dots, q\},\ a \ne f,\ d \ne e.$
Then the corresponding partitions are:
\[
\boldsymbol{\lambda}'_1 = (1,1), \quad \boldsymbol{\lambda}'_2 = (1), \quad \boldsymbol{\lambda}'_3 = (1), \quad \boldsymbol{\lambda}'_4 = (1,1).
\]
The associated hypergraph $\mathcal{G}(\boldsymbol{s}(\boldsymbol{\lambda}'), \mathbf{m})$ has the vertex set
$
\{1_1, 1_2, 2_1, 3_1, 4_1, 4_2\},$
and the edge set:
\[
\{\{1_1, 2_1, 3_1\},\; \{1_2, 2_1, 3_1\},\; \{1_1, 1_2\},\; \{3_1, 4_1\},\; \{3_1, 4_2\},\; \{4_1, 4_2\}\}.
\]
There are four distinct colorings corresponding to this marked multi-coloring:
\[
\begin{aligned}
\tau'_1 &: 1_1 \mapsto a,\; 1_2 \mapsto f,\; 2_1 \mapsto b,\; 3_1 \mapsto c,\; 4_1 \mapsto d,\; 4_2 \mapsto e, \\
\tau'_2 &: 1_1 \mapsto a,\; 1_2 \mapsto f,\; 2_1 \mapsto b,\; 3_1 \mapsto c,\; 4_1 \mapsto e,\; 4_2 \mapsto d, \\
\tau'_3 &: 1_1 \mapsto f,\; 1_2 \mapsto a,\; 2_1 \mapsto b,\; 3_1 \mapsto c,\; 4_1 \mapsto d,\; 4_2 \mapsto e, \\
\tau'_4 &: 1_1 \mapsto f,\; 1_2 \mapsto a,\; 2_1 \mapsto b,\; 3_1 \mapsto c,\; 4_1 \mapsto e,\; 4_2 \mapsto d.
\end{aligned}
\]
Hence, the marked chromatic polynomial of the hypergraph $\mathcal{G}$ is given by:
\[
{_{\mathbf{m}}\Pi^{\mathrm{mark}}_\mathcal{G}}(q) = \frac{1}{2} \Pi_{\mathcal{G}(\boldsymbol{s}(\boldsymbol{\lambda}), \mathbf{m})}(q) + \frac{1}{4} \Pi_{\mathcal{G}(\boldsymbol{s}(\boldsymbol{\lambda}'), \mathbf{m})}(q) = \frac{q^2(q-1)^2(q^2 - 4)}{4}.
\]

\end{example}

\subsection{} 
 We can compute the marked chromatic polynomials
of chordal graphs very explicitly, see  \cite[Theorem 2, Section 3.4]{CKV2025}, using Proposition \ref{ordinarychormatic}. \iffalse First we recall the definition of chordal graphs. Let $G =(V, E)$ be a simple graph with vertex set $V$ and edge set $E.$
\begin{defn}
   We say that $G$ is \textit{chordal} if it has no induced subgraph that is isomorphic to $C_m$, $m\ge 4,$ where
   $C_m$ is the cycle graph with $m$-vertices.    It is well-known fact that chordal graphs posses perfect elimination ordering on the vertices. An ordering $V = \{1, \dots, n\}$ is called \textit{perfect elimination ordering} if for each $1\le k\le n$, we have the subgraph spanned by
$V_k = \{i : 1\le i\le k\}\cap N_G[k]$ must be a clique.
\end{defn}
\noindent\fi
The following theorem is immediate from Proposition \ref{ordinarychormatic}
and \cite[Theorem 3]{BHV24}.
\begin{thm}(\cite[Theorem 2, Section 3.4]{CKV2025})\label{peomarked}
    Let $G$ be a chordal graph with the vertex set $\textbf{[}n \textbf{]}$, let the ordering on the vertices $\textbf{[}n \textbf{]} = \{1, \ldots, n\}$ is a perfect elimination ordering of the vertices of $\mathcal{G}$. Then we have
     \begin{equation}
    {_{\bold{m}}\Pi^{\mathrm{mark}}_G}(q)=\sum\limits_{\boldsymbol{\lambda}\in S(\bold m)}\prod _{j\in \mathrm{supp}(\bold{m})} \binom{q-b^{\boldsymbol{\lambda}}_{j}}{\ell(\boldsymbol{\lambda}_{j})} \frac{\ell(\boldsymbol{\lambda}_{j})!}{\prod_{k=1}^{\infty}(d^{\boldsymbol{\lambda}_{j}}_{k}!)},\ \ b^{\boldsymbol{\lambda}}_{j}=\sum\limits_{\substack{i\in G_j\backslash\{j\}}}\ell(\boldsymbol{\lambda}_{i})
        \end{equation}
        where $G_j$ is the subgraph induced by the vertices $V_k = \{i : 1\le i\le k\}\cap N_G[k]. $
        \end{thm}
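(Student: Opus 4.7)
The plan is to deduce this formula directly from Proposition~\ref{ordinarychormatic} combined with the explicit chromatic polynomial of a chordal graph. First I would apply Proposition~\ref{ordinarychormatic} to rewrite
$${_{\bold{m}}\Pi^{\mathrm{mark}}_G}(q) = \sum_{\boldsymbol{\lambda} \in S(\mathbf{m})} \frac{\Pi_{G(\bold s(\boldsymbol{\lambda}), \mathbf{m})}(q)}{\prod_{i \in \mathrm{supp}(\mathbf{m})} \prod_{k=1}^{\infty} (d^{\boldsymbol{\lambda}_i}_k!)},$$
so that it suffices to establish, for each $\boldsymbol{\lambda} \in S(\mathbf{m})$, the factorization
$$\Pi_{G(\bold s(\boldsymbol{\lambda}), \mathbf{m})}(q) = \prod_{j \in \mathrm{supp}(\mathbf{m})} \binom{q - b^{\boldsymbol{\lambda}}_j}{\ell(\boldsymbol{\lambda}_j)}\, \ell(\boldsymbol{\lambda}_j)!.$$

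Next, I would verify that the blow-up $G(\bold s(\boldsymbol{\lambda}), \mathbf{m})$ is again chordal and inherits a natural perfect elimination ordering from $G$. The construction replaces each vertex $j \in \mathrm{supp}(\mathbf{m})$ by a clique with internal ordering $j_1 < j_2 < \cdots < j_{\ell(\boldsymbol{\lambda}_j)}$ and joins the cliques of adjacent vertices $i \sim j$ in $G$ by a complete bipartite graph. Concatenating the internal orderings of these cliques in the order $j = 1, 2, \ldots, n$ produces a PEO of $G(\bold s(\boldsymbol{\lambda}), \mathbf{m})$: the set of earlier neighbors of $j_k$ consists of the vertices $j_1, \ldots, j_{k-1}$ of the same clique together with all blown-up vertices coming from $V_j \setminus \{j\}$, and these earlier neighbors span a clique in the blow-up because $V_j$ is a clique in $G$.

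With this PEO in hand, \cite[Theorem 3]{BHV24} (equivalently, the classical factorization of the chromatic polynomial of a chordal graph as a product over vertices of $q$ minus the number of earlier neighbors) gives
$$\Pi_{G(\bold s(\boldsymbol{\lambda}), \mathbf{m})}(q) = \prod_{j \in \mathrm{supp}(\mathbf{m})} \prod_{k=0}^{\ell(\boldsymbol{\lambda}_j)-1} \bigl(q - b^{\boldsymbol{\lambda}}_j - k\bigr),$$
since the vertex $j_k$ has exactly $(k-1) + b^{\boldsymbol{\lambda}}_j$ earlier neighbors. Rewriting the resulting falling factorial as $\binom{q - b^{\boldsymbol{\lambda}}_j}{\ell(\boldsymbol{\lambda}_j)}\,\ell(\boldsymbol{\lambda}_j)!$ and substituting back into the expression from Proposition~\ref{ordinarychormatic} yields the claimed formula. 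The only non-trivial step is checking that the concatenated ordering is indeed a PEO and that the earlier-neighbor count reproduces $b^{\boldsymbol{\lambda}}_j + (k-1)$; once this bookkeeping is done, the rest is an immediate computation.
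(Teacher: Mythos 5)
Your proposal is correct and follows exactly the route the paper intends: the paper gives no proof beyond declaring the theorem ``immediate from Proposition~\ref{ordinarychormatic} and \cite[Theorem 3]{BHV24},'' and you have supplied precisely the missing bookkeeping (chordality of the blow-up, the inherited perfect elimination ordering, and the earlier-neighbor count $b^{\boldsymbol{\lambda}}_j + (k-1)$ for the $k$-th vertex of the $j$-th clique). The only blemish is a harmless off-by-one in your phrasing, since your product index $k$ runs from $0$ to $\ell(\boldsymbol{\lambda}_j)-1$ while the vertex you describe is the $(k+1)$-st of its clique; the resulting falling factorial and the final formula are nonetheless correct.
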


\section{An application to independence systems}
\noindent
Let $\mathcal{G}$ be a simple hypergraph. Then it is easy to see that the set of independent subsets of $\mathcal{G}$, $I(\mathcal{G})$ satisfies the following conditions:
\begin{itemize}
  \item the empty set $\emptyset$ belongs to $I(\mathcal{G})$,
    %\item Every singleton set $\{v\}$ is in $I(\mathcal{G})$ for all $v\in \mathcal{V}$.
    \item if $B \in I(\mathcal{G})$ and $C \subseteq B$, then $C \in I(\mathcal{G})$.
\end{itemize}
    
\medskip
\noindent
Any collection of subsets $\mathcal{A} \subseteq P(\textbf{[}n \textbf{]})$ satisfying the above two conditions is called \textit{independence system} or \textit{abstract simplicial complex}. It is easy to see that any independence system must be of the form $I(\mathcal{G})$ for some hypergraph $\mathcal{G}$. 
\begin{prop}\label{indsys}
Let $\mathcal{A} $ be a collection of subsets of $\textbf{[}n \textbf{]}$ that form an independence system, i.e., $\mathcal{A}$ satisfies the following conditions:
\begin{enumerate}
    \item the empty set $\emptyset$ belongs to $\mathcal{A}$,
   % \item Every singleton set $\{i\}$ is in $\mathcal{A}$ for all $1 \leq i \leq n$.
    \item if $B \in \mathcal{A}$ and $C \subseteq B$, then $C \in \mathcal{A}$.
\end{enumerate}
Given such a collection $\mathcal{A} \subseteq P(\textbf{[}n \textbf{]})$, define the hypergraph $\mathcal{G_A}$ with vertex set $\mathcal V = \textbf{[}n \textbf{]}$ and edge set
\[
\mathcal{E} = \{A \subseteq \textbf{[}n \textbf{]} \mid A \notin \mathcal{A}, \ B \in \mathcal{A} \text{ for all } B \subsetneq A\}.
\]
Then we have,
    $\mathcal{G_A}$ is a hypergraph such that $\mathcal{A}= I(\mathcal{G_A})$. Moreover,  $\mathcal{A}$ satisfies the additional condition that $\{v\}\in \mathcal{A}$ for all $v\in \mathcal{V}$ if and only if  $\mathcal{G_A}$ must be a simple hypergraph.   
\end{prop}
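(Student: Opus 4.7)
The plan is to establish the equality $\mathcal{A} = I(\mathcal{G}_\mathcal{A})$ by a standard double inclusion argument, and then separately analyze the simplicity condition. That $\mathcal{G}_\mathcal{A}$ is a hypergraph is immediate from the construction, since its edge set $\mathcal{E}$ is defined as a subcollection of $P(\textbf{[}n \textbf{]})$.

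For the inclusion $\mathcal{A} \subseteq I(\mathcal{G}_\mathcal{A})$, I will take an arbitrary $A \in \mathcal{A}$ and argue by contradiction. If some edge $e \in \mathcal{E}$ were contained in $A$, then by the downward-closed property (condition~(2)) I would conclude $e \in \mathcal{A}$, contradicting the fact that edges are defined to lie outside $\mathcal{A}$. For the reverse inclusion $I(\mathcal{G}_\mathcal{A}) \subseteq \mathcal{A}$, I will take an independent subset $A$ of $\mathcal{G}_\mathcal{A}$ and assume for contradiction that $A \notin \mathcal{A}$. Among all subsets of $A$ that do not belong to $\mathcal{A}$, I will pick one, say $B$, of minimal cardinality; this is well-defined since $A$ itself is such a subset. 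By minimality every proper subset of $B$ lies in $\mathcal{A}$, while $B \notin \mathcal{A}$, so $B$ satisfies exactly the defining property of an edge and thus $B \in \mathcal{E}$. But then $B \subseteq A$ contradicts the independence of $A$ in $\mathcal{G}_\mathcal{A}$.

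For the second assertion, I first observe that the edges of $\mathcal{G}_\mathcal{A}$ automatically form an antichain under inclusion: if $e \subsetneq f$ were two edges, then $e$ would be a proper subset of $f$ not belonging to $\mathcal{A}$, contradicting the defining minimality property of $f$. It therefore remains to control edge cardinalities. The empty set is never an edge, because $\emptyset \in \mathcal{A}$ by condition~(1); and a singleton $\{v\}$ is an edge precisely when $\{v\} \notin \mathcal{A}$, since its only proper subset $\emptyset$ always lies in $\mathcal{A}$. Consequently, every edge has cardinality at least $2$ if and only if $\{v\} \in \mathcal{A}$ for all $v \in \mathcal{V}$, which together with the antichain property is exactly the simplicity condition.

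I do not anticipate any serious obstacle: the whole argument amounts to carefully unwinding the definitions, using the downward closure of $\mathcal{A}$ for one inclusion, a minimality selection for the other, and a routine cardinality check against the definition of \emph{simple} hypergraph for the last equivalence. The only point requiring slight care is ensuring that the minimal $B$ exists (which is immediate in a finite setting) and that $\emptyset$ is excluded as a candidate edge via hypothesis~(1).
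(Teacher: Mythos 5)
Your proposal is correct and follows essentially the same route as the paper: the downward-closure contradiction for $\mathcal{A}\subseteq I(\mathcal{G}_\mathcal{A})$, a minimal-counterexample argument producing an edge for the reverse inclusion, and the antichain-plus-cardinality check for simplicity. The only cosmetic difference is that you select a minimal non-member among subsets of a fixed independent set, whereas the paper picks a globally cardinality-minimal element of $I(\mathcal{G}_\mathcal{A})\setminus\mathcal{A}$; both yield the same edge and the same contradiction.
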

\begin{proof}
Let $A\in \mathcal{A}$. Suppose $A\notin I(\mathcal{G_A})$, then there exists $B\in \mathcal{E}$ such that $B\subseteq A.$
From the condition $(2)$, we get $B\in \mathcal{A}$ and by definition of $\mathcal{E}$ we get $B\notin \mathcal{A}$, a contradition. Thus, we have
$\mathcal{A}\subseteq I(\mathcal{G_A})$. Suppose $\mathcal{A}\subsetneq I(\mathcal{G_A})$, then choose $A\in I(\mathcal{G})$ such that $A$ has smallest possible cardinality and 
$A\notin \mathcal{A}. $ This would immediately imply that $A\in \mathcal{E}$ and again we get a contradiction. Hence, we have $ \mathcal{A} = I(\mathcal{G_A}). $ 

\medskip
\noindent
For the last part, assume that $\mathcal{A}$ satisfies the additional condition that $\{v\}\in \mathcal{A}$ for all $v\in \mathcal{V}$, then all $A\in \mathcal{E}$ must satisfy $|A|\ge 2$ and if $B\subseteq A$ and both $A, B\in \mathcal{E}$ then we have $B\in \mathcal{A}$ unless $B=A$, a contradiction. Converse part is easy to verify. 
\end{proof}

\subsection{}
We illustrate the above association with some examples.
\begin{example}
    Let $n=3$ and $\mathcal{A}= \{\emptyset, \{1\}, \{2\},\{3\}, \{1,2\},\{2,3\}\}$. Then $\mathcal{G}_{\mathcal A}=(\mathcal{V}, \mathcal{E})$ where 
    $\mathcal{V}=\{1,2,3\}$ and $\mathcal{E}=\{ e_1\}$ such that $e_1=\{1,3\}.$

    \begin{center}
        \begin{tikzpicture}[scale=1, transform shape]
            % Define nodes (vertices)
            \node[circle, draw, fill=white, minimum size=20pt] (v1) at (2,0) {1};
            \node[circle, draw, fill=white, minimum size=20pt] (v2) at (4,2) {2};
            \node[circle, draw, fill=white, minimum size=20pt] (v3) at (6,0) {3};

            % Draw edge
            \draw[thick, blue] (v1) -- (v3) node[midway, above] {$e_1$};
        \end{tikzpicture}
    \end{center}
\end{example}

\begin{example}
    Let $n=4$ and $\mathcal{A}= \{\emptyset, \{1\}, \{2\}, \{3\}, \{4\}, \{1,2\}, \{2,3\}, \{1,3\}\}$. Then, the hypergraph $\mathcal{G_A} = (\mathcal{V}, \mathcal{E})$ is defined as follows:
    \[
    \mathcal{V} = \{1,2,3,4\}, \quad \mathcal{E} = \{ e_1, e_2, e_3, e_4 \}
    \]
    where the hyperedges are:
    \[
    e_1 = \{1,4\}, \quad e_2 = \{2,4\}, \quad e_3 = \{3,4\}, \quad e_4 = \{1,2,3\}.
    \]

    \begin{center}
        \tikzset{every picture/.style={line width=0.75pt}} % Default line width     
        \begin{tikzpicture}[x=0.75pt, y=0.75pt, yscale=-1, xscale=1]

            % Hyperedge e4 (rounded rectangle)
            \draw  [color={rgb, 255:red, 74; green, 144; blue, 226}, draw opacity=1, line width=1.5]  
            (302.33,57.97) .. controls (306.75,57.77) and (310.4,61.19) .. (310.48,65.61) 
            -- (313.58,224.03) .. controls (313.67,228.45) and (310.16,232.19) .. (305.74,232.39) 
            -- (281.76,233.47) .. controls (277.35,233.66) and (273.7,230.24) .. (273.61,225.83) 
            -- (270.51,67.4) .. controls (270.43,62.98) and (273.94,59.24) .. (278.35,59.04) -- cycle;

            % Hyperedges
            \draw [color={rgb, 255:red, 208; green, 2; blue, 27}, draw opacity=1, line width=1.5] (300,77) -- (387.45,141.92);
            \draw [color={rgb, 255:red, 126; green, 211; blue, 33}, draw opacity=1, line width=1.5] (299.45,141.92) -- (387.45,141.92);
            \draw [color={rgb, 255:red, 245; green, 166; blue, 35}, draw opacity=1, line width=1.5] (302.45,212.92) -- (387.45,141.92);

            % Nodes (Vertices as circles)
            \node[circle, draw, fill=white, minimum size=20pt] at (285,72) {1};
            \node[circle, draw, fill=white, minimum size=20pt] at (288,206) {3};
            \node[circle, draw, fill=white, minimum size=20pt] at (287,133) {2};
            \node[circle, draw, fill=white, minimum size=20pt] at (392,131) {4};

            % Labels
            \node [color={rgb, 255:red, 74; green, 144; blue, 226}] at (252,246) {$e_{4}$};
            \node [color={rgb, 255:red, 208; green, 2; blue, 27}, rotate=-32.59] at (350.38,89.19) {$e_{1}$};
            \node [color={rgb, 255:red, 126; green, 211; blue, 33}] at (333,125) {$e_{2}$};
            \node [color={rgb, 255:red, 245; green, 166; blue, 35}, rotate=-317.7] at (330.69,166.86) {$e_{3}$};

        \end{tikzpicture}
    \end{center}
\end{example}

\noindent
It is immediate from Proposition \ref{indsys}, we have an appealing combinatorial interpretation for the $I_S(\mathcal{A},x)^q[x^\mathbf{m}]$ for the independence system $\mathcal{A}$.
\begin{thm}
Let $\mathcal{A} $ be a collection of subsets of $\textbf{[}n \textbf{]}$ that form an independence system, and let $S\subseteq \textbf{[}n \textbf{]}$ as a special subset. Let $\mathcal{G}_\mathcal{A}$ be the hypergraph associated with $\mathcal{A}$ constructed as above and set $\mathcal{V}^{\mathrm{sp}}=S$. 
Then we have 
 $$I_S(\mathcal{A},x)^q[x^\mathbf{m}] =  {_{\bold{m}}\Pi^{\mathrm{mark}}_{\mathcal{G}_{\mathcal{A}}}}(q)$$

\end{thm}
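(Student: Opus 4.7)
The plan is to reduce the statement to Theorem~\ref{expmarkedindp} by establishing the identity of formal power series
\[
I_S(\mathcal{A}, x) = I^{\mathrm{mark}}(\mathcal{G}_\mathcal{A}, x)
\]
under the identification $\mathcal{V}^{\mathrm{sp}} = S$. Once this identity is in hand, raising both sides to the $q$-th power and extracting the coefficient of $x^{\mathbf{m}}$ immediately yields the claim, because Theorem~\ref{expmarkedindp} already interprets the coefficients of $I^{\mathrm{mark}}(\mathcal{G}_\mathcal{A}, x)^q$ as the marked chromatic polynomials ${}_{\mathbf{m}}\Pi^{\mathrm{mark}}_{\mathcal{G}_\mathcal{A}}(q)$.

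To prove the identity, I would first apply Proposition~\ref{indsys} to conclude that $\mathcal{A} = I(\mathcal{G}_\mathcal{A})$, so that the underlying sets of marked-independent multisets of $\mathcal{G}_\mathcal{A}$ are exactly the elements of $\mathcal{A}$. Then I would parametrize each $U \in I^{\mathrm{mark}}(\mathcal{G}_\mathcal{A})$ by its underlying set $A \in \mathcal{A}$ together with the multiplicity function $i \mapsto r_i \geq 1$ on $A$, subject to the constraint that $r_i = 1$ whenever $i \in A \setminus S$ (since non-special vertices may occur at most once), while $r_i$ ranges freely over $\mathbb{N}$ when $i \in A \cap S$. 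Summing $x(U) = \prod_{i \in A} x_i^{r_i}$ over these parameters gives
\[
I^{\mathrm{mark}}(\mathcal{G}_\mathcal{A}, x) = \sum_{A \in \mathcal{A}} \left(\prod_{i \in A \setminus S} x_i\right) \left(\prod_{i \in A \cap S} \sum_{r_i \geq 1} x_i^{r_i}\right) = \sum_{A \in \mathcal{A}} \left(\prod_{i \in A \setminus S} x_i\right) \left(\prod_{i \in A \cap S} \frac{x_i}{1-x_i}\right),
\]
which is precisely $I_S(\mathcal{A}, x)$.

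With the equality of the two series established, the proof concludes in one line: by Theorem~\ref{expmarkedindp} applied to the hypergraph $\mathcal{G}_\mathcal{A}$ with special vertex set $\mathcal{V}^{\mathrm{sp}} = S$, we have $I_S(\mathcal{A}, x)^q = I^{\mathrm{mark}}(\mathcal{G}_\mathcal{A}, x)^q = \sum_{\mathbf{m} \geq 0} {}_{\mathbf{m}}\Pi^{\mathrm{mark}}_{\mathcal{G}_\mathcal{A}}(q)\, x^{\mathbf{m}}$, and comparing coefficients of $x^{\mathbf{m}}$ finishes the argument. I do not anticipate any genuine obstacle here: the only slightly delicate point is verifying that the geometric expansion $\sum_{r \geq 1} x_i^{r} = x_i/(1-x_i)$ is really the correct bookkeeping for the freedom to repeat special vertices, which is just a matter of unpacking the definitions of $I_S(\mathcal{A}, x)$ and $I^{\mathrm{mark}}(\mathcal{G}_\mathcal{A}, x)$ in parallel.
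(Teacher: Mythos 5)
Your proposal is correct and follows the same route as the paper: the paper's entire proof is the one-line assertion that $I_S(\mathcal{A},x)^q = I^{\mathrm{mark}}(\mathcal{G}_{\mathcal{A}}, x)^q$ once $\mathcal{V}^{\mathrm{sp}}=S$, combined implicitly with Theorem~\ref{expmarkedindp}. You have simply made explicit the bookkeeping (via Proposition~\ref{indsys} and the geometric series $\sum_{r\ge 1}x_i^r = x_i/(1-x_i)$) that the paper declares ``clear.''
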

\begin{proof}
    It is clear that $I_S(\mathcal{A},x)^q = I^{\mathrm{mark}}(\mathcal{G}_{\mathcal{A}}, x)^q$ with $\mathcal{V}^{\mathrm{sp}}=S$. 
\end{proof}

\section{Hyperplane arrangements and their characteristic polynomials}

\noindent
We will broadly follow \cite{Bjorner} for the theory of subspace arrangements.
 Let $\mathbb{R}^n$ be the $n$-dimensional vector space over $\mathbb R.$ For a subspace $H\subseteq \mathbb R^n$, we define $\mathrm{codim}(H) = n - \mathrm{dim}(H).$ For a subspace $H\subseteq \mathbb R^n$ given by the equations:
$$L^j(\mathbf x) = a_{1j}x_1+\cdots +a_{nj}x_n=0, 1\le j\le r;$$
we define the support of $H$ to be $\mathrm{supp}(H) = \{i\in  \textbf{[}n \textbf{]} : a_{ij} \neq 0 \ \text{for some $1\le j\le r$}\}.$
A \textit{subspace arrangement} or \textit{arrangement} is a finite collection of subspaces
$\mathscr{A} =\{H_1, \ldots, H_p\}$ of $\mathbb R^n.$ We call an arrangement as a hyperplane arrangement if all $H_i$'s are hyperplanes, i.e., $\mathrm{codim}(H_i) = 1,\, 1\le i\le p$. 
Let the equations of the subspace $H_i$ be given by 
$L_i^j(\mathbf x) = a^i_{1j}x_1+\cdots +a^i_{nj}x_n=0$, $1\le j\le r_i$, \ $1\le i\le p$; then the defining polynomial of $\mathscr{A}$ is given by $Q(\mathbf x) = \prod_{i=1}^p \prod_{j=1}^{r_i} L_i^j(\mathbf x)$.
 The intersection poset of the arrangement $\mathscr{A}$ is denoted by $(L(\mathscr{A}), \leq )$, where 
 $$L(\mathscr{A}) = \{ H_{i_1}\cap \cdots \cap H_{i_k} : 1\le i_1<\cdots <i_k\le p\}\cup \{\mathbb R^n\}$$ and $\leq$ is defined as $x \leq y \text{ in }L(\mathscr{A}) \text{ if } x \supseteq y \text{ (as subsets of $V$)}.$  The characteristic polynomial of $\mathscr{A}$ is defined by 
 $$\chi_{\mathscr{A}}(q) = \sum\limits_{X\in L(\mathscr{A})}\mu(\mathbb R^n, X) q^{\mathrm{dim}(X)},$$
 where $\mu$ is the \text{M\"obius} function of $(L(\mathscr{A}), \leq )$. Let $q$ be a prime power and let $\mathscr{A}_q$ be the subspace arrangement in $\mathbb{F}_q$ induced from $\mathscr{A}$, precisely $\mathscr{A} =\{H_1^q, \ldots, H_p^q\}$ where 
 $$H_i^q = \{\mathbf x \in \mathbb F_q^n : L_i^j(\mathbf x) \equiv 0 \ (\mathrm{mod}\ q),\ 1\le j\le r_i\}, \ 1\le i\le p.$$
 It is well-known that (see \cite[Theorem 2.2]{Christos}, \cite[Theorem 5.15]{Stanley}) $L(\mathscr{A})\cong L(\mathscr{A}_q)$ for some prime power $q$ and we also have that 
 $$\chi_{\mathscr{A}}(q)= \# \big(\mathbb F_q^n - \bigcup\limits_{H \in \mathscr{A}_q} H \big).$$
 %This identity holds for general subspace arrangements as well; see \cite[Theorem 2.2]{Christos}.

\begin{example}
\begin{enumerate}
    \item     Let $\mathcal{G}$ be a simple graph with vertex set $\textbf{[}n \textbf{]}$ and edge set $\mathcal{E}$. Then the 
    graphical arrangement of $\mathcal{G}$ is $\mathscr{A}_{\mathcal{G}} = \{H_e : e\in \mathcal{E}\},$
where $H_e = \{x_{i} = x_{j}\}$ if $e = \{i, j\}.$

    \item More generally, let $\mathcal{G}$ be a simple hypergraph with vertex set $\textbf{[}n \textbf{]}$ and edge set $\mathcal{E}$. Then the 
    hyper-graphical arrangement of $\mathcal{G}$ is $\mathscr{A}_{\mathcal{G}} = \{H_e : e\in \mathcal{E}\},$
where $H_e = \{x_{i_1} =\cdots = x_{i_r}\}$ if $e = \{i_1, \ldots, i_r\}.$
\end{enumerate}
\end{example}

\subsection{} Now we record the connection between the chromatic polynomials of hypergraphs and the characteristic polynomials of hyper-graphical arrangements. This is a classical result for graphical arrangement and it can be found, for example, in \cite[Theorem 2.7., Page no. 25]{Stanley}. Furthermore, an interpretation of the   characteristic polynomial of a subspace arrangement as the chromatic polynomial of an associated hypergraph is implicit in \cite[Theorem~3.4]{stanley1998graph}.

\begin{thm}Let $\mathcal{G}$ be a simple hypergraph with vertex set $\textbf{[}n \textbf{]}$ and edge set $\mathcal{E}$. Let
the hyper-graphical arrangement of $\mathcal{G}$ be given by $\mathscr{A}_{\mathcal{G}} = \{H_e : e\in \mathcal{E}\},$
where $H_e = \{x_{i_1} =\cdots = x_{i_r}\}$ if $e = \{i_1, \ldots, i_r\}.$ Then, we have
$$\chi_{\mathscr{A}_{\mathcal{G}}}(q) = \Pi_{\mathcal{G}}(q).$$
\end{thm}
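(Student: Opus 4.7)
The plan is to use the finite field method of Athanasiadis/Crapo--Rota, which is already alluded to in the paragraph just above the statement. By \cite[Theorem 2.2]{Christos} (and \cite[Theorem 5.15]{Stanley}), for a prime power $q$ sufficiently large we have $L(\mathscr{A}_{\mathcal{G}}) \cong L(\mathscr{A}_{\mathcal{G},q})$ and
\[
\chi_{\mathscr{A}_{\mathcal{G}}}(q) = \# \bigl( \mathbb{F}_q^n \setminus \bigcup_{e \in \mathcal{E}} H_e^q \bigr),
\]
so it suffices to match this count with $\Pi_{\mathcal{G}}(q)$ for infinitely many $q$ and then invoke the polynomial identity principle recorded in the \textbf{Observation} of Section~2.

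Next, I would identify a point $(x_1,\dots,x_n) \in \mathbb{F}_q^n$ with a coloring $c\colon \textbf{[}n\textbf{]} \to \{1,\dots,q\}$ via any fixed bijection $\mathbb{F}_q \cong \{1,\dots,q\}$, setting $c(i) = x_i$. For a hyperedge $e = \{i_1,\dots,i_r\}$, the defining equations of $H_e^q$ read $x_{i_1} = x_{i_2} = \cdots = x_{i_r}$, so the point lies off $H_e^q$ if and only if the values $c(i_1),\dots,c(i_r)$ are not all equal. Comparing this with the definition of a marked multi-coloring in the case $\mathbf{m}=(1,\dots,1)$ and $\mathcal{V}^{\mathrm{sp}} = \emptyset$: each $f(v)$ is a singleton $\{c(v)\}$, and the condition $\bigcap_{v \in e} f(v) = \emptyset$ is exactly the statement that not all of $c(i_1),\dots,c(i_r)$ coincide. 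Therefore points of $\mathbb{F}_q^n \setminus \bigcup_{e} H_e^q$ are in bijection with proper colorings of $\mathcal{G}$ counted by $\Pi_{\mathcal{G}}(q)$.

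Combining the two steps gives $\chi_{\mathscr{A}_{\mathcal{G}}}(q) = \Pi_{\mathcal{G}}(q)$ for all prime powers $q$ for which the intersection poset is preserved. Since there are infinitely many such $q$, and since both sides are polynomials in $q$ (the left by general theory of subspace arrangements, the right by Proposition~\ref{props6} specialized to $\mathbf{m}=\mathbf{1}$), the desired equality of polynomials follows.

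I do not anticipate a genuine obstacle here; the only subtle point is to make sure that the finite field method applies to subspace (not just hyperplane) arrangements over $\mathbb{Z}$, which is precisely the content of Athanasiadis's theorem cited in the paper. The rest is a direct unwinding of definitions, so the proof should be short and conceptually transparent.
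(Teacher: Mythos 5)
Your proposal is correct and follows essentially the same route as the paper: both apply the finite field method to write $\chi_{\mathscr{A}_{\mathcal{G}}}(q)$ as the number of points of $\mathbb F_q^n$ avoiding every $H_e^q$, identify such points with proper colorings of $\mathcal{G}$ via the observation that a point misses $H_e^q$ exactly when the coordinates indexed by $e$ are not all equal, and conclude by polynomiality. Your version is, if anything, slightly more careful than the paper's in explicitly invoking the polynomial identity principle over infinitely many prime powers.
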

\begin{proof}
    We know that there exists a prime power $q >>0$ such that $L((\mathscr{A}_{\mathcal{G}})_q)\cong L(\mathscr{A}_{\mathcal{G}})$ and we also have that 
 $$\chi_{\mathscr{A}_{\mathcal{G}}}(q)= \# \big(\mathbb F_q^n - \bigcup\limits_{H \in \mathscr{A}_{\mathcal{G}}} H \big).$$
But $\mathbb F_q^n - \bigcup\limits_{H \in \mathscr{A}_{\mathcal{G}}} H $ consists of $(\alpha_1,\ldots, \alpha_n)\in \mathbb F_q^n$ such that for each edge $e\in \mathcal{E}$, there exists $i\neq j\in e$ such that 
$\alpha_i\neq \alpha_j$. That means it counts the number of functions $f: \textbf{[}n \textbf{]}\to \mathbb F_q$ such that, for each $e\in \mathcal{E}$, we have
not all $f(i)$ are equal, i.e.,
$\cap_{i\in e} \{f(i)\} =\emptyset$ (this is same as a vertex coloring of $\mathcal{G}$).That means
we have $$\# \big(\mathbb F_q^n - \bigcup\limits_{H \in \mathscr{A}_{\mathcal{G}}} H \big) = \Pi_{\mathcal{G}}(q).$$ Hence we have $\chi_{\mathscr{A}_{\mathcal{G}}}(q) = \Pi_{\mathcal{G}}(q).$
 
\end{proof}

\subsection{}
Fix a set of special nodes $S\subseteq \textbf{[}n \textbf{]}$ as before. Motivated by the theory of graphs, we define the notion of $q-$coloring of $\mathscr{A}$, marked-chromatic polynomials of $\mathscr{A}$, and marked-independence series of $\mathscr{A}$
as follows: 
\begin{defn} Let $\boldsymbol{m}=(m_1, \ldots, m_n)$ be an $n$-tuple of non-negative integers.
\begin{enumerate}
    \item 
A $S$-marked or (simply marked) multi-coloring of $\mathscr{A}$ associated to $\mathbf{m}$ using at most $q$-colors 
 is a function
    $f: \textbf{[}n \textbf{]} \longrightarrow P^{\mathrm{mult}}(\mathbb F_q)$, given by $f(i)= C_i$, if 
     the following holds:
    \begin{enumerate}
        \item $C_i$ is a subset of $\mathbb{F}_q$ for all $i\notin S$,
        \item $| C_i|= m_i$ for all $1\le i\le n$,
        \item there is no $\mathbf x = (x_1, \ldots , x_n)$ such that $x_j \in C_j,$ $1 \leq j \leq n$, and  
        $\mathbf x\in H_i $ (i.e., the tuple $ (L_i^1(\mathbf x), \ldots, L_i^{r_i}(\mathbf x)) = 0$)  for some $1\le i\le p$.
    \end{enumerate} \medskip
    \item 
    We call two $q-$colorings $f, g$ of $\mathscr{A}$ distinct if $f(i)\neq g(i)$ for some $i\in \textbf{[}n \textbf{]}.$ \medskip 
    \item Define the marked-chromatic polynomial of $\mathscr{A}$ associated to $\mathbf{m}$ and $S$ by
     $$_{\mathbf m}\Pi^{\mathrm{mark}}_{\mathscr{A}}(q)= \# \text{distinct $S$-marked $q-$colorings of $\mathscr{A}$ associated with $\mathbf m$}.$$
     As before, we drop "mark" from $_{\mathbf m}\Pi^{\mathrm{mark}}_{\mathscr{A}}(q)$ if $S = \emptyset$, i.e., $C_i$'s are all subsets of $\mathbb F_q$ for all $1\le i\le n$.
   
     \end{enumerate}
\end{defn}

\begin{rem}
    It is important to note that, a~priori, the function counting the number of distinct $S$-marked $q$-colorings of $\mathscr{A}$ associated with $\mathbf{m}$ need not be a polynomial in~$q$. However, Theorem~\ref{mainthmarrangement} establishes a key result: this function $_{\mathbf{m}}\Pi^{\mathrm{mark}}_{\mathscr{A}}(q)$ is indeed a polynomial in~$q$. In particular, this allows us to evaluate it at any complex value of~$q$.
\end{rem}

\medskip
\noindent
The above remark allows us to define the marked-independence series of an arrangement.
\begin{defn} Let $q$ be a variable. The marked-independence series of $\mathscr{A}$ is defined by
         $$I^{\mathrm{mark}}(\mathscr{A},x)=\left(\sum_{\mathbf{m}\in \mathbb{Z}_{\ge 0}^n} {_{\bold{m}}\Pi^{\mathrm{mark}}_\mathcal{G}(q)}\ x^\mathbf{m}\right)^{1/q} = \exp\left(\frac{1}{q}\mathrm{log}\ \big( \sum_{\mathbf{m}\in \mathbb{Z}_{\ge 0}^n} {_{\bold{m}}\Pi^{\mathrm{mark}}_\mathcal{G}(q)}\ x^\mathbf{m}\big)\right)$$
        %Note that $F = \sum_{\mathbf{m}\in \mathbb{Z}_{\ge 0}^n} {_{\bold{m}}\Pi^{\mathrm{mark}}_\mathcal{G}(q)}\ x^\mathbf{m}$ has constant term $1$, so it makes sense to define $$I^{\mathrm{mark}}(\mathscr{A},x)= \exp\left(\frac{1}{q}\mathrm{log}\ F\right).$$
As before, when $S =\emptyset$, then we use simply $I(\mathscr{A},x)$ to denote $I^{\mathrm{mark}}(\mathscr{A},x)$. Note that, we have
$$I^{\mathrm{mark}}(\mathscr{A},x)^q = \sum_{\mathbf{m}\in \mathbb{Z}_{\ge 0}^n} {_{\bold{m}}\Pi^{\mathrm{mark}}_\mathcal{G}(q)}\ x^\mathbf{m}.$$
In particular, we can specialize $q$ to be any integer. We have the following immediate observation relating chromatic polynomials of
hyper-graphical arrangements and hypergraphs. 
\end{defn}

\iffalse
\medskip
\begin{example}
    Let $\mathscr{A} = \{H = \{x_n=a_1x_1+\cdots +a_{n-1}x_{n-1}\}\}$, where $0\neq a_i\in \mathbb{Z}, 1\le i\le n-1,$  
    $\mathbf{m} = (m_1,\ldots, m_n)\in \mathbb Z_{\ge 0}^n$, and set $S =\emptyset$. Then we have
   $$_{\mathbf m}\Pi_{\mathscr{A}}(q) = {q\choose m_1}{q\choose m_2}\cdots {q\choose m_n}-{q\choose m_1}{q\choose m_2}\cdots {q\choose m_{n-1}},$$
    \begin{itemize}
       \item since we have  ${q\choose m_1}$ choices for $C_1, \ldots,$,
${q\choose m_{n-1}}$ choices for $C_{n-1}$, and $x_n$ is determined by the choices of $x_1,\ldots, x_{n-1}$, so it is enough to choose only $(C_1,\ldots, C_{n-1})$, thus the total number of choices for this is ${q\choose m_1}{q\choose m_2}\cdots {q\choose m_{n-1}}$.

\item We have to subtract ${q\choose m_1}{q\choose m_2}\cdots {q\choose m_{n-1}}$ from the 
        total number of  choices for $(C_1,\ldots, C_n)$, which is
        ${q\choose m_1}{q\choose m_2}\cdots {q\choose m_n}$. {\color{red} why $C_n$ has $m_n$ number of elements!?, this looks not correct or need more justification.}
    \end{itemize}
\end{example}
\medskip\fi

\begin{prop}
    Let $\mathcal{G}$ be a simple hypergraph with vertex set $\textbf{[}n \textbf{]}$ and edge set $\mathcal{E}$. Let
the hyper-graphical arrangement of $\mathcal{G}$ be given by $\mathscr{A}_{\mathcal{G}} = \{H_e : e\in \mathcal{E}\},$
where $$\text{
$H_e = \{x_{i_1} =\cdots = x_{i_r}\}$ if $e = \{i_1, \ldots, i_r\}.$}$$ Then, it is immediate that we have
$$_{\mathbf m}\Pi^{\mathrm{mark}}_{\mathscr{A}}(q) = \ _{\mathbf m}\Pi^{\mathrm{mark}}_{\mathcal{G}}(q), \ \text{for all $S = \mathcal{V}^{\mathrm{sp}}\subseteq \mathcal{V}.$}$$ \qed
\end{prop}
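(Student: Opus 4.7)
The plan is a direct unraveling of definitions: identify the color set $\{1,\dots,q\}$ with $\mathbb{F}_q$, set $S=\mathcal{V}^{\mathrm{sp}}$, and exhibit a bijection between $S$-marked multi-colorings of $\mathscr{A}_{\mathcal{G}}$ associated to $\mathbf{m}$ and $\mathcal{V}^{\mathrm{sp}}$-marked multi-colorings of $\mathcal{G}$ associated to $\mathbf{m}$ by taking the ``identity'' map $f\mapsto f$. The size requirements $|C_i|=m_i$ and the requirement that $C_i$ be an ordinary subset (rather than a multiset) for $i\notin S$ appear \emph{verbatim} in both definitions, so the only content to verify is that the ``no $\mathbf{x}$'' condition along each hyperplane $H_e$ translates to the hypergraph edge condition $\bigcap_{v\in e} f(v)=\emptyset$.

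The key translation is as follows. Fix an edge $e=\{i_1,\dots,i_r\}\in\mathcal{E}$ and the associated hyperplane $H_e=\{x_{i_1}=\cdots=x_{i_r}\}$; note that coordinates $x_j$ with $j\notin e$ are unconstrained by $H_e$. I will argue the logical equivalence
\[
\bigl[\,\exists\,\mathbf{x}\in H_e\text{ with }x_j\in C_j\ \forall j\,\bigr]
\ \Longleftrightarrow\
\bigcap_{j\in e} C_j\neq\emptyset.
\]
For $(\Leftarrow)$: given $c\in\bigcap_{j\in e}C_j$, set $x_{i_1}=\cdots=x_{i_r}=c$ and pick any $x_j\in C_j$ for $j\notin e$; the resulting $\mathbf{x}$ lies in $H_e$ and satisfies $x_j\in C_j$ for all $j$. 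For $(\Rightarrow)$: if $\mathbf{x}\in H_e$ satisfies $x_j\in C_j$ for all $j$, then the common value $c:=x_{i_1}=\cdots=x_{i_r}$ lies in every $C_{i_k}$, so $c\in\bigcap_{j\in e}C_j$. Negating both sides yields precisely the edge condition of the hypergraph. Running this equivalence over all $e\in\mathcal{E}$ identifies the two sets of colorings, hence their cardinalities agree.

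The only mild obstacle is a degenerate case in the $(\Leftarrow)$ direction: if some $m_j=0$ for $j\notin e$, then $C_j=\emptyset$ and the ``padding'' step ``pick any $x_j\in C_j$'' fails. This is handled by the paper's standing convention $S\subseteq\mathrm{supp}(\mathbf{m})$ and the natural interpretation that a marked multi-coloring is effectively determined on $\mathrm{supp}(\mathbf{m})$, so one restricts both constructions to that support before applying the bijection; equivalently, one may note that in such degenerate situations both sides satisfy the constraint for $H_e$ vacuously (on the arrangement side) and via $f(j)=\emptyset\subseteq e\cap\mathrm{supp}(\mathbf{m})$ (on the hypergraph side), so the identification of counts still goes through.
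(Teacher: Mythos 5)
Your proof is correct and is precisely the definitional unraveling that the paper leaves implicit (the proposition is stated with a \qed{} and no written proof): identifying the colors $\{1,\dots,q\}$ with $\mathbb{F}_q$, the equivalence between the existence of a point of $H_e$ with coordinates drawn from the $C_j$ and the non-emptiness of $\bigcap_{j\in e}C_j$ is exactly the required observation. Your closing remark on the degenerate case $m_j=0$ for $j\notin e$ is the only wobbly part --- the resolution is not the convention $S\subseteq\mathrm{supp}(\mathbf{m})$ (which concerns special vertices, not this issue) but rather reading condition (c) of the paper's definition hyperplane by hyperplane, requiring only the coordinates $x_j$ with $j\in\mathrm{supp}(H_e)=e$, since under the literal ``full tuple'' reading a single empty $C_j$ outside $e$ would make (c) vacuous and break the count --- but this is an imprecision in the paper's definition rather than a gap in your argument.
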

\iffalse
\begin{example}
    Let us consider an arrangement $\mathscr{A}$ consists of a hyperplane $H$  whose equation given by $x+y=z$ in $\mathbb R^3.$ Now given  $m_1, m_2, m_3\ge 1$, the polynomial $_{\mathbf m}\Pi_{\mathscr{A}}(q)$ counts the
    number of non-empty subsets  $A, B, C\subseteq \mathbb F_q$ satisfying:
    $$\text{ $|A|=m_1, |B|=m_2,$ $|C|=m_3$, and   $(A+B)\cap C =\emptyset$}$$
    for large $q>>0.$ Now using the Cauchy–Davenport theorem we get $m_1m_2\ge |A+B|\ge m_1+m_2-1$ (see \cite{Davenport}). 
    This implies that $|\mathbb{F}_q - (A+B)| = q-m_1-m_2-r+1$ for some $r\ge 0.$ For any given choices of $A$ and $B$, we must have $|A+B| = m_1+m_2+r-1$ for some $r\ge 0$. Now, we can choose any $C$
    from $\mathbb{F}_q - (A+B)$ that having $m_3$ elements, thus we get
    $$_{\mathbf m}\Pi_{\mathscr{A}}(q) =\sum\limits_{r=0}^{m_1m_2-m_1-m_2+1}{q\choose m_1}{q\choose m_2}{q-m_1-m_2-r+1\choose m_3}. $$
\end{example}\fi

\begin{example}\label{numberthyexample}
    Let us consider an arrangement $\mathscr{A}$ consists of a hyperplane $H$  whose equation given by $x_1+x_2+\cdots +x_n=x_{n+1}$ in $\mathbb R^{n+1}.$ Now given  $m_1,\cdots, m_{n+1}\ge 1$, the polynomial $_{\mathbf m}\Pi_{\mathscr{A}}(q)$ (for large prime $q>>0$) counts the
    number of ordered tuples  $(A_1,\ldots , A_{n+1})$ where $ A_1,\ldots , A_{n+1}\subseteq \mathbb F_q$ are non-empty subsets   satisfying:
    $$\text{ $|A_i|=m_i, 1\le i\le n+1$, and   $(A_1+\cdots +A_n)\cap A_{n+1} =\emptyset$}.$$
    Now using the Cauchy–Davenport theorem and induction, we get \text{(see \cite{Davenport})} $$\prod_{i=1}^{n}m_i\ge |\sum_{i=1}^n A_i|\ge (\sum_{i=1}^nm_i)-n+1.$$ 
   %This implies that $|\mathbb{F}_q - (\sum_{i=1}^n A_i)| = q-(\sum_{i=1}^nm_i)+n-r-1$ for some $r\ge 0.$ 
   Now, for any given choices of $A_i\subseteq \mathbb F_q$ having $|A_i|=m_i$, $1\le i\le n$, we must have 
   $|\mathbb{F}_q - (\sum_{i=1}^n A_i)| = q-(\sum_{i=1}^nm_i)+n-r-1$ for some $r\ge 0.$ 
   %$|\sum_{i=1}^n A_i| = (\sum_{i=1}^nm_i)-n+r+1 $ for some $r\ge 0$.
   We can choose any subset
    from $\mathbb{F}_q - (\sum_{i=1}^n A_i)$ having $m_{n+1}$ elements for $A_{n+1}$, however we will  have $\{A_i\}_{i=1}^n$ and $\{B_i\}_{i=1}^n$ such that $\sum_{i=1}^n A_i =\sum_{i=1}^n B_i$. So we need to be careful about this. Considering this, we get
    \begin{equation}\label{examplefirst}
        _{\mathbf m}\Pi_{\mathscr{A}}(q) = \sum\limits_{r=0}^{M}\sum\limits_{C\subseteq \mathbb F_q \atop |C|=\sum_{i=1}^nm_i-n+r+1} \alpha_C(q)
{q-(\sum_{i=1}^nm_i)+n-r-1\choose m_{n+1}},
    \end{equation}
 where $M = \prod_{i=1}^{n}m_i-(\sum_{i=1}^nm_i)+n-1$ and for a given $C\subseteq \mathbb F_q$, 
 $\alpha_C(q)$ counts the number of ordered tuples  $(A_1,\ldots , A_{n})$ where  $A_1,\ldots , A_{n}\subseteq \mathbb F_q$ are non-empty subsets
 such that $|A_i|=m_i, 1\le i\le n$, and $\sum_{i=1}^n A_i = C$.
It is clear that we get (by over counting)
    $$_{\mathbf m}\Pi_{\mathscr{A}}(q) \le \sum\limits_{r=0}^{M}{q\choose m_1}{q\choose m_2}\cdots {q\choose m_n}{q-(\sum_{i=1}^nm_i)+n-r-1\choose m_{n+1}}.$$
\end{example}

\begin{rem}
    We emphasize here that there are no obvious reasons to believe that the right hand side of the equation \ref{examplefirst} gives us a polynomial in $q$. It is a polynomial in $q$ comes from Theorem \ref{mainthmarrangement}. One could compute these polynomials explicitly using Vosper's theorem (see \cite{Vosper}) etc. It will be studied elsewhere. 
\end{rem}

\medskip
\subsection{} Given a subspace arrangement $\mathscr{A}$, set of special nodes $S$, and  $\mathbf{m}\in \mathbb Z_{\ge 0}$, we introduce a new arrangement;
we denote it by $\mathscr{A}(S, \mathbf{m})$, and call it  as $S$-marked (or simply marked) $\mathbf{m}$-clan of $\mathscr{A}.$
The  $S$-marked $\mathbf{m}$-clan of $\mathscr{A}$ is an arrangement in $\mathbb R^{|\mathbf{m}|}$ consists of $\mathbb R^{|\mathbf m|}$ and the following subspaces:
\begin{itemize}
    \item for each $i\in S\cap \mathrm{supp}(\mathbf{m})$ and $1\le r\neq s \le m_i$, we have $H(i, r, s)$ is given by the equation
    $P_{rs}^i(\mathbf x)=x_{ir}-x_{is} =0$. %for all $1\le r\neq s \le m_i;\ i\in S\cap \mathrm{supp}(\mathbf{m});$

\medskip

    \iffalse\item For all $ \mathbf{k}\in \{(k_i)_{i\in \mathrm{supp}(\mathbf{m})} : 1\le k_i \le m_i, \ \forall i\in \mathrm{supp}(\mathbf{m})\}$, we have $$L_{s \mathbf{k}}^j(\mathbf x) = a^s_{1j}x_{1{k_1}}+\cdots +a^s_{nj}x_{n{k_n}}=0$$ 
    for all $1\le s\le p $ such that $\mathrm{supp}(H_s)\subseteq \mathrm{supp}(\mathbf{m})$ and for all $1\le j\le r_s$.
    It is understood that $x_{rk_r}=x_r$ if $r\notin \mathrm{supp}(\mathbf{m}).$\fi

    \item For every tuple $\mathbf{k} = (k_i)_{i \in \mathrm{supp}(\mathbf{m})}$ with $1 \le k_i \le m_i$ for all $i \in \mathrm{supp}(\mathbf{m})$, and 
    for all $1 \le s \le p$ such that $\mathrm{supp}(H_s) \subseteq \mathrm{supp}(\mathbf{m})$, we have the subspace $H_{s\mathbf{k}}$ given by the equations
\[
L_{s\mathbf{k}}^j(\mathbf{x}) = a^s_{1j} x_{1k_1} + \cdots + a^s_{nj} x_{nk_n} = 0; \ \text{for all $1 \le j \le r_s.$}
\]
%By convention, if $r \notin \mathrm{supp}(\mathbf{m})$, then we interpret $x_{rk_r} = x_r$.

\end{itemize} \noindent
Precisely, for each in $i\in \mathrm{supp}(\mathbf{m})$, we introduce $m_i$ number of new variables $x_{ir}$, \ $1\le r\le m_i$, and  
we do not introduce any new variable for $i\in \textbf{[}n \textbf{]}$ such that $m_i =0$. As before, we simply use $\mathscr{A}(\mathbf{m})$ for 
$\mathscr{A}(S, \mathbf{m})$ if $S =\mathrm{supp}(\mathbf{m}).$

\begin{example}
    Let $\mathscr{A} = \{H = \{x_1-x_2+x_3-x_4 = 0\}\}$ and $\mathbf{m} = (2, 2, 1, 2)$. Then, we have $\mathrm{supp}(\mathbf{m}) =  \textbf{[}4 \textbf{]}.$
    \begin{enumerate}
        \item If $S = \textbf{[}4 \textbf{]}$ then $\mathscr{A}(\mathbf{m})$ consists of $\mathbb R^7$ and the following subspaces whose equations give by: 
    \begin{itemize}
    \item $x_{11}-x_{12} = 0$; $x_{21}-x_{22} = 0$; and $x_{41}-x_{42} = 0$; 
    \item $x_{1{k_1}}-x_{2k_2}+x_3-x_{4k_4} = 0$; $ 1\le k_r \le m_r; r = 1, 2, 4.$
    \end{itemize}
\medskip
        \item If $S =\emptyset$ then $\mathscr{A}(\emptyset, \mathbf{m})$ consists of $\mathbb R^7$ and the following subspaces whose equations give by: 
    \begin{itemize}
   % \item $x_{11}-x_{12} = 0$; $x_{21}-x_{22} = 0$; and $x_{41}-x_{42} = 0$; 
    \item $x_{1{k_1}}-x_{2k_2}+x_3-x_{4k_4} = 0$, $ 1\le k_r \le m_r, r = 1, 2, 4.$
    \end{itemize}

     \end{enumerate}
\end{example}
\begin{example}
     Let $\mathscr{A} = \{H_1 = \{x_1+x_2+x_3+x_4 = 0\}, H_2=\{x_1=x_3\}\}, $ and $\mathbf{m} = (2, 0, 1, 0)$. Then, we have $\mathrm{supp}(\mathbf{m}) =  \{1, 3 \}.$
   If $S = \{1, 3 \}$ then $\mathscr{A}(S, \mathbf{m})$ consists of $\mathbb R^3$ and the following subspaces whose equations give by: 
    \begin{itemize}
    \item $x_{11}-x_{12} = 0$;
    \item $x_{1{1}}=x_{31}; x_{1{2}}=x_{31}$.
    \end{itemize}
\end{example}

\subsection{}\label{mainthmarrangementsection} As before, for given $\bold m\in \mathbb{Z}_{\geq 0}$, we define
 $$S(\mathbf m)=\{ \underline{\lambda}=(\boldsymbol{\lambda}_i)_{i \in  \textbf{[}n \textbf{]}}: \boldsymbol{\lambda}_i \text{ is a partition of }m_i \text{ and } \boldsymbol{\lambda}_i=(1^{m_i})\ \forall i\notin S\}.$$  Set $ m(S) = \prod_{i\in S\cap \mathrm{supp}(\mathbf{m})} m_i!.$
 Let $\mathscr{A}$ be a given subspace arrangement with the set of special nodes $S$.
 For each tuple $(\underline{\lambda}, \bold m)\in S(\bold m)$, we associate a new arrangement 
 $\mathscr{A}(\underline{\lambda},\bold m)$ of  $\mathbb{R}^{|\underline{\lambda}|}$, where $|{{\underline{\lambda}}|}=\sum_{i\in [n]}\ell(\boldsymbol{\lambda}_i)$, as follows: %; we denote this new arrangement by $\mathscr{A}(\underline{\lambda},\bold m)$, defined as follows: 
 \medskip
 \begin{itemize}
    \item For each $i\in \mathrm{supp}(\mathbf{m})$ and $1\le r\neq s \le \ell(\boldsymbol{\lambda}_i)$, we have $H(i, r, s)$ is given by the equation
    $P_{rs}^i(\mathbf x)=x_{ir}-x_{is} =0$. %for all $1\le r\neq s \le m_i;\ i\in S\cap \mathrm{supp}(\mathbf{m});$

\medskip

    \iffalse\item For all $ \mathbf{k}\in \{(k_i)_{i\in \mathrm{supp}(\mathbf{m})} : 1\le k_i \le m_i, \ \forall i\in \mathrm{supp}(\mathbf{m})\}$, we have $$L_{s \mathbf{k}}^j(\mathbf x) = a^s_{1j}x_{1{k_1}}+\cdots +a^s_{nj}x_{n{k_n}}=0$$ 
    for all $1\le s\le p $ such that $\mathrm{supp}(H_s)\subseteq \mathrm{supp}(\mathbf{m})$ and for all $1\le j\le r_s$.
    It is understood that $x_{rk_r}=x_r$ if $r\notin \mathrm{supp}(\mathbf{m}).$\fi

    \item For every tuple $\mathbf{k} = (k_i)_{i \in \mathrm{supp}(\mathbf{m})}$ with $1 \le k_i \le \ell(\boldsymbol{\lambda}_i)$ for all $i \in \mathrm{supp}(\mathbf{m})$, and 
    for all $1 \le s \le p$ such that $\mathrm{supp}(H_s) \subseteq \mathrm{supp}(\mathbf{m})$, we have the subspace $H_{s\mathbf{k}}$ given by the equations
\[
L_{s\mathbf{k}}^j(\mathbf{x}) = a^s_{1j} x_{1k_1} + \cdots + a^s_{nj} x_{nk_n} = 0; \ \text{for all $1 \le j \le r_s.$}
\]

\end{itemize}
We have the following close relationship between the marked chromatic polynomials  and the characteristic polynomials. %Set $$ m(S) = \prod\limits_{i\in S\cap \mathrm{supp}(\mathbf{m})} m_i!.$$
\begin{thm}\label{mainthmarrangement}
    Let $\mathscr{A}$ be an arrangement and  $\mathbf m = (m_1, \ldots, m_n)\in  \mathbb{Z}_{\ge 0}^n$, and let $S \subseteq \mathrm{supp}(\mathbf{m})\subseteq  \textbf{[}n \textbf{]}$ be the set of special nodes. Then we have 
   \begin{equation}\label{equarrangement}
{}_{\mathbf{m}}\Pi^{\mathrm{mark}}_{\mathscr{A}}(q) = \sum_{\underline{\lambda}\in S(\bold m)}
\frac{\chi_{\mathscr{A}(\underline{\lambda}, \mathbf{m})}(q)}
{\prod_{i\in \mathrm{supp}(\bold m)}\prod_{k=1}^{\infty}(d^{\boldsymbol{\lambda}_i}_{k}!)}.
\end{equation}
In particular, ${}_{\mathbf{m}}\Pi^{\mathrm{mark}}_{\mathscr{A}}(q)$ is a polynomial in $q,$ and we can specialize $q$ to any complex numbers. \qed 
\end{thm}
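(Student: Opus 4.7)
The overall strategy is to mirror the argument of Proposition~\ref{ordinarychormatic}, but replacing ``ordinary chromatic polynomial of a hypergraph'' by ``characteristic polynomial of a subspace arrangement.'' The key input is the classical fact (already invoked earlier in this section) that for a subspace arrangement $\mathscr{B}$ in $\mathbb R^N$ there exist infinitely many prime powers $q$ such that $L(\mathscr{B})\cong L(\mathscr{B}_q)$ and
\[
\chi_{\mathscr{B}}(q)=\#\bigl(\mathbb F_q^{N}\setminus \bigcup_{H\in \mathscr{B}_q}H\bigr).
\]
Applying this to each arrangement $\mathscr{A}(\underline{\lambda},\mathbf m)$ (finitely many in total as $\underline{\lambda}$ ranges over $S(\mathbf m)$), one can find a common infinite family of prime powers $q$ for which every $\chi_{\mathscr{A}(\underline{\lambda},\mathbf m)}(q)$ is simultaneously the $\mathbb F_q$-point count of its complement. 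I will work with such $q$ and then extend to all complex $q$ by the standard ``two polynomials agreeing on infinitely many points'' observation noted after Proposition~\ref{keyprop}.

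Next, I will build a type-stratified bijection. Fix a large $q$ as above, and let $f$ be a marked multi-coloring of $\mathscr{A}$ over $\mathbb F_q$ associated with $\mathbf m$. For each $i\in\mathrm{supp}(\mathbf m)$ the multiset $f(i)=C_i$ has some distinct elements with a multiplicity profile; sort these multiplicities in weakly decreasing order to obtain a partition $\boldsymbol{\lambda}_i\vdash m_i$. For $i\notin S$, $f(i)$ is a subset, so $\boldsymbol{\lambda}_i=(1^{m_i})$; this is exactly the constraint built into $S(\mathbf m)$. Thus $f$ acquires a well defined \emph{type} $\underline{\lambda}=(\boldsymbol{\lambda}_i)\in S(\mathbf m)$, and it suffices to prove, for each fixed $\underline{\lambda}$,
\[
\#\{\text{marked multi-colorings of type }\underline{\lambda}\}=\frac{\chi_{\mathscr{A}(\underline{\lambda},\mathbf m)}(q)}{\prod_{i\in\mathrm{supp}(\mathbf m)}\prod_{k\ge 1}d^{\boldsymbol{\lambda}_i}_{k}!}.
\]

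To get this identity, I will unwind the definition of $\mathscr{A}(\underline{\lambda},\mathbf m)$. A point of $\mathbb F_q^{|\underline{\lambda}|}\setminus\bigcup_{H\in\mathscr{A}(\underline{\lambda},\mathbf m)_q}H$ is a tuple $(x_{i,r})$ with, first, $x_{i,r}\neq x_{i,s}$ whenever $r\neq s$ (because the ``diagonal'' subspaces $H(i,r,s)\colon x_{ir}=x_{is}$ must be avoided), and, second, no choice $\mathbf k=(k_i)$ produces a point lying on any $H_s$ (this is the avoidance of the subspaces $H_{s\mathbf k}$). Sending such a tuple to the multiset coloring $C_i:=\{x_{i,1}^{\lambda_i^1},x_{i,2}^{\lambda_i^2},\dots,x_{i,\ell(\boldsymbol{\lambda}_i)}^{\lambda_i^{\ell(\boldsymbol{\lambda}_i)}}\}$ yields a marked multi-coloring of exact type $\underline{\lambda}$: distinctness of the $x_{i,r}$ ensures the multiplicity profile is indeed $\boldsymbol{\lambda}_i$, and the forbidden-intersection condition for tuples with one entry from each $C_i$ is precisely the avoidance of all $H_{s\mathbf k}$. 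The fiber over a given $f$ of type $\underline{\lambda}$ consists of all orderings of the distinct elements of $C_i$ into the slots $1,\dots,\ell(\boldsymbol{\lambda}_i)$ compatible with the weakly decreasing ordering of $\boldsymbol{\lambda}_i$; equal parts can be freely permuted, giving exactly $\prod_{k\ge 1}d^{\boldsymbol{\lambda}_i}_k!$ orderings for each $i$, hence $\prod_{i\in\mathrm{supp}(\mathbf m)}\prod_{k\ge 1}d^{\boldsymbol{\lambda}_i}_k!$ in total. This yields the displayed identity.

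Summing over $\underline{\lambda}\in S(\mathbf m)$ gives \eqref{equarrangement} as an equality of integers for all sufficiently large prime powers $q$ in the chosen family. Since the right-hand side is manifestly a polynomial in $q$ and the two sides agree on infinitely many values, they coincide as polynomials, and in particular ${}_{\mathbf m}\Pi^{\mathrm{mark}}_{\mathscr{A}}(q)$ is a polynomial in $q$ that can be specialized to arbitrary complex values. The main technical obstacle is the simultaneous choice of $q$ that realizes the characteristic polynomial identities for all the finitely many arrangements $\mathscr{A}(\underline{\lambda},\mathbf m)$ at once; this is resolved by noting that each arrangement excludes only finitely many ``bad'' prime powers, so an infinite set of good $q$ remains, which is all that is needed for the polynomial identification.
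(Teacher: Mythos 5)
Your proposal is correct and follows essentially the same route as the paper's proof: both interpret each $\chi_{\mathscr{A}(\underline{\lambda},\mathbf m)}(q)$ as an $\mathbb F_q$-point count of the complement, map a point $(x_{i,r})$ to the multiset coloring $C_i=\{x_{i,\alpha}^{\boldsymbol{\lambda}_i^{\alpha}}\}$, observe that the fiber over a coloring of type $\underline{\lambda}$ has size $\prod_{i}\prod_{k}d^{\boldsymbol{\lambda}_i}_{k}!$, and sum over $\underline{\lambda}\in S(\mathbf m)$. Your added care about simultaneously choosing prime powers valid for all the finitely many arrangements, and the explicit polynomial-identification step at the end, are welcome refinements of details the paper leaves implicit, but they do not change the argument.
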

\begin{proof}
To each element $\underline{\lambda} = (\boldsymbol{\lambda}_i)_{i \in I} \in S(\mathbf{m})$, we associate $\mathbb{M}_{\mathscr{A}(\underline{\lambda}, \mathbf{m})}$ which is the collection of
$\mathbf{x}\in \mathbb F_q^{|{\underline{\lambda}}|}$ such that \medskip
\begin{itemize}
    \item $ x_{ir}\neq x_{is}$ for all $1\le r\neq s \le l(\boldsymbol{\lambda}_i),\ i\in \mathrm{supp}(\mathbf{m});$
\medskip
    \item $(x_{1k_1},\ldots, x_{n{k_n}})\notin H_{s\mathbf{k}} $
    for all $ \mathbf{k}\in \{(k_i)_{i\in \mathrm{supp}(\mathbf{m})} : 1\le k_i \le l(\boldsymbol{\lambda}_i), \ \forall i\in \mathrm{supp}(\mathbf{m})\}$ and
   for all $1\le s\le p $ such that $\mathrm{supp}(H_s)\subseteq \mathrm{supp}(\mathbf{m})$. \end{itemize} \medskip
It is easy to see that $\chi_{\mathscr{A}(\underline{\lambda}, \mathbf{m})}(q)$ counts the 
number of elements in $\mathbb{M}_{\mathscr{A}{(\underline{\lambda}, \mathbf{m})}}$. 
For a given $\mathbf{x}\in \mathbb{M}_{\mathscr{A}{(\underline{\lambda}, \mathbf{m})}}$, define the function $f_{\mathbf{x}}: \textbf{[}n \textbf{]} \longrightarrow P^{\mathrm{mult}}(\mathbb F_q)$, given by $f_\mathbf{x}(i)= C_i$, where
$$\text{
$C_i = \{x^{\boldsymbol{\lambda}^\alpha_i}_{i\alpha} : 1\le \alpha\le \ell(\boldsymbol{\lambda}_i)\}$, where $\boldsymbol{\lambda}_i = (\lambda^1_i \ge \cdots \ge \lambda^{r_i}_i > 0)$.}$$ Then, $f_{\mathbf{x}}$ gives us a $S$-marked multi-coloring of $\mathscr{A}$ associated to $\mathbf{m}$ using at most $q$-colors.
If some parts $\boldsymbol{\lambda}^\alpha_j$ are equal, then permuting the corresponding  $x_{j\alpha}$ among the equal parts in  the co-ordinates $x_{j1}, \ldots , x_{j\ell(\boldsymbol{\lambda}_j)}$  yields the same S-marked multi-coloring. Conversely if $f: \textbf{[}n \textbf{]} \longrightarrow P^{\mathrm{mult}}(\mathbb F_q)$ be  a $S$-marked multi-coloring of $\mathscr{A}$ associated to $\mathbf{m}$ using at most $q$-colors given by $f(i)=C_i$ where $C_i=\{x^{\boldsymbol{\lambda}^\alpha_i}_{i\alpha} : 1\le \alpha\le \ell(\boldsymbol{\lambda}_i)\}$ gives $\prod_{i\in \mathrm{supp}(\bold m)}\prod_{k=1}^{\infty}(d^{\boldsymbol{\lambda}_i}_{k}!)$ number of $\mathbf{x}\in \mathbb{M}_{\mathscr{A}(\underline{\lambda}, \mathbf{m})}$. Therefore we have $\frac{\chi_{\mathscr{A}(\underline{\lambda}, \mathbf{m})}(q)}
{\prod_{i\in \mathrm{supp}(\bold m)}\prod_{k=1}^{\infty}(d^{\boldsymbol{\lambda}_i}_{k}!)}$ number of S-marked multi-coloring of $\mathscr{A}$ corresponding to  $\underline{\lambda} = (\boldsymbol{\lambda}_i)_{i \in I} \in S(\mathbf{m})$. Summing over all tuples $\boldsymbol{\lambda} \in S(\mathbf{m})$ gives the expression for the number of S-marked multi-coloring ${_{\bold{m}}\Pi^{\mathrm{mark}}_\mathscr{A}}(q)$ of $\mathscr{A}$, and this completes the proof.

\end{proof}

\subsection{} We have the following immediate corollaries of Theorem \ref{mainthmarrangement}.
\begin{cor}
    Let $\mathscr{A}$ be an arrangement and  $\mathbf m = (m_1, \ldots, m_n)\in  \mathbb{Z}_{\ge 0}^n$. Then we have 
   \begin{equation}\label{equarrangement}
{}_{\mathbf{m}}\Pi_{\mathscr{A}}(q) = 
\frac{\chi_{\mathscr{A}(\mathbf{m})}(q)}
{m(\mathrm{supp}(\mathbf{m}))}.
\end{equation} \qed
\end{cor}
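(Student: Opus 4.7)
The plan is to derive the corollary as a direct specialization of Theorem \ref{mainthmarrangement} to the case $S = \emptyset$. By the convention fixed in the paper, ${}_{\mathbf{m}}\Pi_{\mathscr{A}}(q)$ is just ${}_{\mathbf{m}}\Pi^{\mathrm{mark}}_{\mathscr{A}}(q)$ with $S = \emptyset$, so it suffices to show that the right-hand side of \eqref{equarrangement} collapses to $\chi_{\mathscr{A}(\mathbf{m})}(q)/m(\mathrm{supp}(\mathbf{m}))$ when we set $S = \emptyset$.

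First I would examine the index set $S(\mathbf{m})$ in this case. The defining condition is that $\boldsymbol{\lambda}_i = (1^{m_i})$ for every $i \notin S$; with $S = \emptyset$ this condition forces $\boldsymbol{\lambda}_i = (1^{m_i})$ for \emph{all} $i \in \textbf{[}n\textbf{]}$. Hence $S(\mathbf{m})$ consists of a single tuple $\underline{\lambda}^0$, and the sum in \eqref{equarrangement} has exactly one term. For this tuple, $\ell(\boldsymbol{\lambda}^0_i) = m_i$, the only nonzero part-multiplicity is $d^{\boldsymbol{\lambda}^0_i}_1 = m_i$, and $d^{\boldsymbol{\lambda}^0_i}_k = 0$ for $k \ge 2$. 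The denominator in \eqref{equarrangement} therefore simplifies as
\[
\prod_{i \in \mathrm{supp}(\mathbf{m})} \prod_{k=1}^{\infty} d^{\boldsymbol{\lambda}^0_i}_{k}! \;=\; \prod_{i \in \mathrm{supp}(\mathbf{m})} m_i! \;=\; m(\mathrm{supp}(\mathbf{m})).
\]

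Next I would verify that the arrangement $\mathscr{A}(\underline{\lambda}^0,\mathbf{m})$ from Section \ref{mainthmarrangementsection} coincides with the $\mathbf{m}$-clan $\mathscr{A}(\mathbf{m}) = \mathscr{A}(\mathrm{supp}(\mathbf{m}),\mathbf{m})$ defined earlier. Because $\ell(\boldsymbol{\lambda}^0_i) = m_i$ for each $i \in \mathrm{supp}(\mathbf{m})$, the hyperplanes $H(i,r,s)$ defined by $x_{ir} - x_{is} = 0$ and the subspaces $H_{s\mathbf{k}}$ indexed by tuples $\mathbf{k}$ with $1 \le k_i \le m_i$ appear in both arrangements, ambient space $\mathbb{R}^{|\mathbf{m}|}$ matches, and the defining equations are identical. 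Substituting this identification and the simplified denominator into \eqref{equarrangement} yields the claimed formula.

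There is essentially no obstacle here — the work is entirely bookkeeping, and the only point worth double-checking is that the two descriptions of the arrangement agree on the nose (not merely up to relabeling of coordinates), which follows from the way both definitions fix the indices $x_{ik}$ with $1 \le k \le m_i$ for $i \in \mathrm{supp}(\mathbf{m})$.
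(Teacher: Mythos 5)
Your proposal is correct and matches the paper's (unstated) argument: the paper presents this as an immediate corollary of Theorem~\ref{mainthmarrangement}, and specializing to $S=\emptyset$ so that $S(\mathbf{m})$ collapses to the single tuple with $\boldsymbol{\lambda}_i=(1^{m_i})$, the denominator becomes $\prod_{i\in\mathrm{supp}(\mathbf{m})}m_i!=m(\mathrm{supp}(\mathbf{m}))$ and $\mathscr{A}(\underline{\lambda}^0,\mathbf{m})$ coincides with $\mathscr{A}(\mathbf{m})$, is exactly the intended derivation. Your extra check that the two descriptions of the arrangement agree on the nose is a worthwhile detail the paper leaves implicit.
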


\medskip
\noindent
Note that, $(-1)^{|\mathbf{m}|}\chi_{\mathscr{A}(\mathbf m)}(-1)$
counts $\#$ connected components of
$\mathbb{R}^n - \bigcup\limits_{H\in \mathscr{A}(\mathbf{m})}H$ by \cite{Zaslavsky}. This motivates us to study the coefficients of $I(\mathscr{A}, -x)^{-q} $. 
Indeed, we have:
\begin{cor}
    Let $\mathscr{A}$ be a hyperplane arrangement in $\mathbb R^n$. Then we have $I(\mathscr{A}, -x)^{-q} \ge 0$ for all $q\in \mathbb{Z}_{\ge 0}.$
\end{cor}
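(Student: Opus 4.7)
The plan is to unpack the coefficient of $x^{\mathbf{m}}$ in $I(\mathscr{A},-x)^{-q}$ and reduce the claim to a sign-alternation property of the characteristic polynomial of the associated hyperplane arrangement $\mathscr{A}(\mathbf{m})$.

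First I would use Theorem~\ref{expmarkedindp} in the arrangement setting (equivalently, Theorem~\ref{mainthmarrangement} with $S=\emptyset$) to write
$$I(\mathscr{A},x)^q \;=\; \sum_{\mathbf{m}\ge 0}{}_{\mathbf{m}}\Pi_{\mathscr{A}}(q)\,x^{\mathbf{m}},$$
and then substitute $x\mapsto -x$ and $q\mapsto -q$ to obtain
$$I(\mathscr{A},-x)^{-q}\,[x^{\mathbf{m}}] \;=\; (-1)^{|\mathbf{m}|}\,{}_{\mathbf{m}}\Pi_{\mathscr{A}}(-q).$$
Applying the immediately preceding corollary of Theorem~\ref{mainthmarrangement}, this becomes
$$I(\mathscr{A},-x)^{-q}\,[x^{\mathbf{m}}] \;=\; \frac{(-1)^{|\mathbf{m}|}\,\chi_{\mathscr{A}(\mathbf{m})}(-q)}{m(\mathrm{supp}(\mathbf{m}))}.$$
Since the denominator is a positive integer, the claim reduces to showing that $(-1)^{|\mathbf{m}|}\chi_{\mathscr{A}(\mathbf{m})}(-q)\ge 0$ for every $q\in\mathbb{Z}_{\ge 0}$.

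Next I would check that $\mathscr{A}(\mathbf{m})$ is itself a central hyperplane arrangement in $\mathbb{R}^{|\mathbf{m}|}$. By hypothesis each $H_s\in\mathscr{A}$ is cut out by a single homogeneous linear equation $L_s(\mathbf{x})=0$. Therefore every member of $\mathscr{A}(\mathbf{m})$, namely each $H(i,r,s)=\{x_{ir}=x_{is}\}$ and each cloned hyperplane $H_{s\mathbf{k}}=\{a^s_{1}x_{1k_1}+\cdots+a^s_{n}x_{nk_n}=0\}$, is still defined by a single homogeneous linear equation, so $\mathscr{A}(\mathbf{m})$ is a central hyperplane arrangement in $\mathbb{R}^{|\mathbf{m}|}$.

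Finally I would invoke the classical Whitney sign-alternation theorem for geometric lattices. For any central hyperplane arrangement $\mathscr{A}'$ in $\mathbb{R}^N$, the intersection poset $L(\mathscr{A}')$ is a geometric lattice whose Möbius function satisfies $(-1)^{\mathrm{codim}(X)}\mu(\mathbb{R}^N,X)\ge 0$ for every $X\in L(\mathscr{A}')$. Substituting into the defining formula of $\chi_{\mathscr{A}'}$ yields
$$(-1)^N\chi_{\mathscr{A}'}(-q) \;=\; \sum_{X\in L(\mathscr{A}')}(-1)^{\mathrm{codim}(X)}\mu(\mathbb{R}^N,X)\,q^{\dim(X)} \;\ge\; 0 \quad \text{for } q\ge 0.$$
Taking $\mathscr{A}'=\mathscr{A}(\mathbf{m})$ and $N=|\mathbf{m}|$ gives $(-1)^{|\mathbf{m}|}\chi_{\mathscr{A}(\mathbf{m})}(-q)\ge 0$ and hence the desired non-negativity of every coefficient of $I(\mathscr{A},-x)^{-q}$.

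I do not foresee a serious obstacle: the argument is essentially a bookkeeping consequence of Theorem~\ref{mainthmarrangement} combined with Whitney's classical sign alternation for geometric lattices. The only mild point to verify carefully is that the $\mathbf{m}$-clan construction preserves the hyperplane (hence geometric-lattice) property, which is immediate from the explicit linear equations defining $\mathscr{A}(\mathbf{m})$. It is also worth noting that the resulting inequality holds in fact for every real $q\ge 0$, not merely $q\in\mathbb{Z}_{\ge 0}$, because the right-hand side above is manifestly a polynomial in $q$ with non-negative coefficients.
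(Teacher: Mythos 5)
Your proposal is correct and follows essentially the same route as the paper: both reduce the coefficient of $x^{\mathbf{m}}$ in $I(\mathscr{A},-x)^{-q}$ to $(-1)^{|\mathbf{m}|}\chi_{\mathscr{A}(\mathbf{m})}(-q)$ divided by a positive integer, via the corollary ${}_{\mathbf{m}}\Pi_{\mathscr{A}}(q)=\chi_{\mathscr{A}(\mathbf{m})}(q)/m(\mathrm{supp}(\mathbf{m}))$, and then conclude by the sign-alternation $(-1)^{\mathrm{codim}(X)}\mu(\hat{0},X)\ge 0$ of the M\"obius function on the geometric lattice of the central hyperplane arrangement $\mathscr{A}(\mathbf{m})$. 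Your explicit check that the $\mathbf{m}$-clan of a hyperplane arrangement is again a hyperplane arrangement, and the remark that the bound holds for all real $q\ge 0$, are small refinements of the same argument.
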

\begin{proof}
Note that $I(\mathscr{A}, -x)^q$ is obtained by replacing $x_i$ by $-x_i$ for all $i\in \textbf{[}n \textbf{]}$ in $I(\mathscr{A}, x)^q$. 
Thus we have   $I(\mathscr{A}, -x)^q=\sum_{\mathbf{m}\in \mathbb{Z}_{\ge 0}^n}(-1)^{|\mathbf{m}|} {_{\bold{m}}\Pi_\mathscr{A}(q)}\ x^\mathbf{m}$, we only need to
 prove that     $$(-1)^{|\mathbf{m}|} \,_{\mathbf m}\Pi_{\mathscr{A}}(-q)= (-1)^{|\mathbf{m}|} \frac{\chi_{\mathscr{A}(\mathbf m)}(-q)}{\mathbf m!}\ge 0$$
for all $\mathbf{m}\ge 0$.  This is immediate because the numerator of the right hand side $$(-1)^{|\mathbf{m}|}\chi_{\mathscr{A}(\mathbf m)}(-q) = \sum\limits_{X\in \mathcal{L}(\mathscr{A}(\mathbf{m}))} (-1)^{\mathrm{codim}(X)}\mu(\mathbb R^n, X)q^{\mathrm{dim}(X)}.$$
Since $\mathcal{L}(\mathscr{A}(\mathbf{m}))$ is a geometric lattice (see \cite[Proposition 3.6; Page no. 35 and Theorem 3.10; Page no. 37]{Stanley}), we have  that $$\text{$(-1)^{\mathrm{codim}(X)}\mu(\mathbb R^n, X) \ge 0$ for all $X\in \mathcal{L}(\mathscr{A}(\mathbf{m})).$} $$

\end{proof}

\begin{cor}\label{graphnonnegative}
    Let $\mathcal{G}$ be a simple graph. Then we have $I(\mathcal{G}, -x)^{-q} \ge 0$ for all $q\in \mathbb{Z}_{\ge 0}.$
\end{cor}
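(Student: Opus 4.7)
The plan is to reduce the statement to the hyperplane-arrangement version already proved, namely the preceding corollary that $I(\mathscr{A},-x)^{-q}\ge 0$ for any hyperplane arrangement $\mathscr{A}$. Since a simple graph is in particular a $2$-uniform hypergraph, its associated hyper-graphical arrangement $\mathscr{A}_{\mathcal{G}} = \{H_e : e\in \mathcal{E}\}$ with $H_e = \{x_i = x_j\}$ for $e=\{i,j\}$, is genuinely a hyperplane arrangement (every $H_e$ has codimension one). So the previous corollary applies directly to $\mathscr{A}_{\mathcal{G}}$.

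The missing piece is to identify $I(\mathcal{G},x)$ with the arrangement-theoretic series $I(\mathscr{A}_{\mathcal{G}},x)$ as formal power series. Taking $\mathcal V^{\mathrm{sp}}=S=\emptyset$, the proposition just above the theorem on arrangements gives the identity
\[
{}_{\mathbf m}\Pi_{\mathscr{A}_{\mathcal{G}}}(q) \;=\; {}_{\mathbf m}\Pi_{\mathcal{G}}(q), \qquad \mathbf m\in\mathbb Z_{\ge 0}^n.
\]
Combining this with Theorem~\ref{expmarkedindp} (applied with empty special set, so that $I^{\mathrm{mark}}(\mathcal{G},x)=I(\mathcal{G},x)$) and with the definition of $I(\mathscr{A}_{\mathcal{G}},x)^q$ as the generating function of $_{\mathbf m}\Pi_{\mathscr{A}_{\mathcal{G}}}(q)$, I would conclude that
\[
I(\mathcal{G},x)^q \;=\; I(\mathscr{A}_{\mathcal{G}},x)^q
\]
as formal power series, for every integer $q$. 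Specializing to $q=1$ yields $I(\mathcal{G},x)=I(\mathscr{A}_{\mathcal{G}},x)$.

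Finally, substituting $x\mapsto -x$ and raising to the power $-q$ (both operations preserve the equality in $\tilde R_n$), the preceding corollary applied to the hyperplane arrangement $\mathscr{A}_{\mathcal{G}}$ gives $I(\mathscr{A}_{\mathcal{G}},-x)^{-q}\ge 0$, hence $I(\mathcal{G},-x)^{-q}\ge 0$, for all $q\in\mathbb Z_{\ge 0}$. There is no real obstacle here once the identification $I(\mathcal{G},x)=I(\mathscr{A}_{\mathcal{G}},x)$ is recorded; the entire content is the observation that a graph's edges produce genuine hyperplanes, so the geometric-lattice argument (based on Zaslavsky and on $(-1)^{\mathrm{codim}(X)}\mu(\mathbb R^n,X)\ge 0$ in a geometric lattice) used in the hyperplane-arrangement corollary carries over verbatim.
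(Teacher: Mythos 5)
Your proposal is correct and is essentially the paper's own argument: the paper's remark immediately after this corollary confirms that its proof proceeds by identifying the (multi-)chromatic polynomials of $\mathcal{G}$ with those of the graphical hyperplane arrangement $\mathscr{A}_{\mathcal{G}}$ (hence $I(\mathcal{G},x)=I(\mathscr{A}_{\mathcal{G}},x)$) and then invoking the preceding corollary on hyperplane arrangements. Your explicit chain through Theorem~\ref{expmarkedindp} and the proposition ${}_{\mathbf m}\Pi_{\mathscr{A}_{\mathcal{G}}}(q)={}_{\mathbf m}\Pi_{\mathcal{G}}(q)$ with $S=\emptyset$ fills in exactly the identification the paper leaves implicit.
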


\begin{rem}
    Several proofs can be given to Corollary \ref{graphnonnegative}. Our proof uses the fact that the chromatic polynomials of graphs are same as the characteristic polynomials of graphical hyperplane arrangements. 
\end{rem}

\section{Non-negativity of $(-q)$-power of independence polynomial of hypergraphs}
\noindent
Let $\mathcal{G}$ be a simple hypergraph. Consider the following series
$$I(\mathcal{G}, -x)^q = \sum\limits_{\mathbf{m}\ge 0} (-1)^{|\mathbf{m}|}{_{\mathbf{m}}\Pi_{\mathcal{G}}}(q) x^{\mathbf{m}},$$
which is obtained by replacing $x_i$ by $-x_i$ for all $i\in \textbf{[}n \textbf{]}$. We want to analyze when $I(\mathcal{G}, -x)^{-q}\ge 0$ for all $q\ge 1.$
First observe that, we have $I(\mathcal{G}, -x)^{-q}\ge 0$ for all $q\ge 1$ if and only if $I(\mathcal{G}, -x)^{-1}\ge 0$.
So it is enough to study when $I(\mathcal{G}, -x)^{-1}$ becomes non-negative.  We immediately have the necessary condition.
\begin{prop}
   Let $\mathcal{G}$ be a simple hypergraph.  If $I(\mathcal{G}, -x)^{-1} \ge 0$ then $\mathcal{G}$ is an even hypergraph (i.e., all edges of $\mathcal{G}$ must have even number of elements). 
\end{prop}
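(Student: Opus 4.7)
The plan is to prove the contrapositive: assuming $\mathcal{G}$ has an edge $e = \{v_1, \ldots, v_d\}$ with $d$ odd (so $d \ge 3$ by simplicity), I will exhibit an explicit $\mathbf{m} \in \mathbb{Z}_{\ge 0}^n$ for which the coefficient of $x^{\mathbf{m}}$ in $I(\mathcal{G}, -x)^{-1}$ is strictly negative.

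The first step is a localization observation. Expanding $I(\mathcal{G},-x)^{-1} = \sum_{k\ge 0}(1-I(\mathcal{G},-x))^k$ as a geometric series, every contribution to the coefficient of $x^{\mathbf{m}}$ arises from products of non-empty independent subsets of $\mathcal{G}$ all contained in $\mathrm{supp}(\mathbf{m})$. Since a subset of $\mathrm{supp}(\mathbf{m})$ is independent in $\mathcal{G}$ if and only if it is independent in the induced sub-hypergraph $\mathcal{G}_{\mathrm{supp}(\mathbf{m})}$, we obtain
$$I(\mathcal{G}, -x)^{-1}[x^{\mathbf{m}}] = I(\mathcal{G}_{\mathrm{supp}(\mathbf{m})}, -x)^{-1}[x^{\mathbf{m}}].$$
Taking $\mathrm{supp}(\mathbf{m}) = e$ and using simplicity of $\mathcal{G}$ (which forces $e$ to be the unique edge of $\mathcal{G}_e$, as any edge $f \subseteq e$ of $\mathcal{G}$ must equal $e$), the computation reduces to the hypergraph with a single edge $e$ on the vertex set $e$.

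For this single-edge hypergraph, $I(\mathcal{G}_e, x) = \prod_{i \in e}(1+x_i) - \prod_{i \in e} x_i$; substituting $-x$ and using that $d$ is odd,
$$I(\mathcal{G}_e, -x) = \prod_{i \in e}(1-x_i) + \prod_{i \in e} x_i.$$
Writing $P = \prod_{i\in e}(1-x_i)$ and $Q = \prod_{i\in e} x_i$ and expanding $(P+Q)^{-1} = \sum_{k\ge 0}(-1)^k Q^k/P^{k+1}$, together with the identity $\dfrac{x_i^k}{(1-x_i)^{k+1}} = \sum_{n\ge k}\binom{n}{k} x_i^n$, yields the explicit formula
$$I(\mathcal{G}_e, -x)^{-1}[x^{\mathbf{m}}] = \sum_{k=0}^{\min_{i \in e} m_i} (-1)^k \prod_{i \in e} \binom{m_i}{k}.$$

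The final step is to pick $\mathbf{m}$ with $m_{v_1} = 2$, $m_{v_i} = 1$ for $2 \le i \le d$, and $m_v = 0$ otherwise; then $\min_{i\in e} m_i = 1$, so only $k=0$ and $k=1$ contribute and the right-hand side equals $1 - 2\cdot 1^{d-1} = -1$. Combined with the localization step this produces a negative coefficient of $x^{\mathbf{m}}$ in $I(\mathcal{G}, -x)^{-1}$, contradicting the hypothesis. I do not anticipate a real obstacle here: the only part needing minor care is the localization argument, after which the rest is a short geometric-series manipulation which works precisely because $d$ is odd (for even $d$ the sign in front of $Q$ flips and $\sum_{k\ge 0} Q^k/P^{k+1}$ is manifestly non-negative, consistent with the conjectured converse).
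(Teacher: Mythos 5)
Your proof is correct and follows essentially the same strategy as the paper's: localize to the induced sub-hypergraph on a single odd edge $e$ (using simplicity) and exhibit an explicitly negative coefficient of $I(\mathcal{G},-x)^{-1}$. The only difference is cosmetic --- the paper evaluates the coefficient at $\mathbf{m}=(2,\dots,2)$ via its Example on the single-edge hypergraph, obtaining $2+(-2)^{|e|}<0$, whereas you take $\mathbf{m}=(2,1,\dots,1)$ and derive the coefficient formula directly from the geometric series, obtaining $-1$.
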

\begin{proof}
First note that if  $I(\mathcal{G}, -x)^{-1} \ge 0$ then  $I(\mathcal{G}, -x)^{-1}|_{x_i = 0; \ i\in I} \ge 0$  for any subset $I\subseteq \mathcal{V}$.
Suppose $\mathcal{G}$ is not an even hypergraph, then there exists $e\in \mathcal{E}$ such that $|e| $ is odd number. Write $e = \{i_1,\ldots, i_r\}$, $r\ge 3$.
Let $\widehat{\mathcal{G}}_e$ be the hypergraph obtained by taking vertices $\widehat{\mathcal{V}}_e = \{i_1,\ldots, i_r\}$ and edges as $\widehat{\mathcal{E}}_e = \{e\},$ (i.e.,  $\widehat{\mathcal{G}}_e$ is obtained by removing all vertices in $\mathcal{V}\backslash \{i_1,\ldots, i_r\}$).
Now set $x_i = 0$ for all $i\notin e$ in $f = I(\mathcal{G}, -x)^{-1}$, then it is easy to see that we have $$f|_{x_i = 0;\ i\notin e} = I(\widehat{\mathcal{G}}_e , -x)^{-1}.$$
   From Example \ref{impexample}, for $\mathrm{2} = (2, \ldots, 2)$, we have $$I(\widehat{\mathcal{G}}_e , -x)^{-1}[x^{\mathbf{2}}] =\left(\prod_{i\in e}(1-x_i)+\prod_{i\in e}x_i \right)^{-1}[x^{\mathbf{2}}]  = 2+ (-2)^r<0, \ \text{where $\mathrm{2} = (2, \ldots, 2)$,}$$ which leads to a contradiction.% {\color{red} double check this!}. 
\end{proof}

\subsection{}
We have checked the following statement using SAGEMATH for hypergraphs having up to  $n =10$ vertices. 

\medskip
\noindent
\textbf{Conjecture}. Let $\mathcal{G}$ be a simple hypergraph. Then we have  $I(\mathcal{G}, -x)^{-1} \ge 0$ if and only if
all edges of $\mathcal{G}$ must have even number of elements.

\bibliographystyle{plain}
\bibliography{bibliography}
\end{document}